\documentclass[leqno,12pt]{article} 
\setlength{\textheight}{21cm}
\setlength{\textwidth}{16cm}
\setlength{\oddsidemargin}{0cm}
\setlength{\evensidemargin}{0cm}
\setlength{\topmargin}{0cm}
\usepackage{amsmath, amssymb}
\usepackage{amsthm} 
%
%
%
\theoremstyle{plain} 
\newtheorem{theorem}{\indent\sc Theorem}[section]

\newtheorem{corollary}[theorem]{\indent\sc Corollary}
\newtheorem{proposition}[theorem]{\indent\sc Proposition}

\theoremstyle{definition} 
\newtheorem{definition}[theorem]{\indent\sc Definition}
\newtheorem{remark}[theorem]{\indent\sc Remark}
\newtheorem{example}[theorem]{\indent\sc Example}

%

%

\makeatletter
%
\makeatother
%
\title{$\eta$-Ricci solitons in $(\varepsilon)$-almost paracontact metric manifolds} 

\author{Adara Monica Blaga, Selcen Y\"{u}ksel Perkta\c{s}, \\
Bilal Eftal Acet and
Feyza Esra Erdo\u{g}an}

\date{}
%
\pagestyle{myheadings}

\begin{document}

\maketitle

\begin{abstract}
The object of this paper is to study $\eta $-Ricci
solitons on $\left( \varepsilon \right) $-almost paracontact metric
manifolds. We investigate $\eta $-Ricci solitons in the case when its
potential vector field is exactly the characteristic vector field $\xi $ of
the $\left( \varepsilon \right) $-almost paracontact metric manifold and
when the potential vector field is torse-forming. We also study Einstein-like and $\left(
\varepsilon \right) $-para Sasakian manifolds admitting $\eta $-Ricci
solitons. Finally we obtain some results for $\eta $-Ricci solitons on $%
\left( \varepsilon \right) $-almost paracontact metric manifolds with a special view towards
parallel symmetric $\left( 0,2\right) $-tensor fields.
\end{abstract}

\medskip

\noindent {\bf Mathematics Subject Classification:} \medskip 53C15, 53C25,
53C40, 53C42, 53C50.

\medskip

\noindent {\bf Keywords and phrases: }$\left( \varepsilon \right) $-almost
paracontact metric manifold, $\left( \varepsilon \right) $-para Sasakian
manifold, Einstein-like manifold, $\eta $-Ricci soliton.

\medskip

\section{Introduction}

The notion of Ricci soliton which is a natural generalization of an Einstein
metric (i.e. the Ricci tensor $S$ is a constant multiple of $g$) was
introduced by Hamilton \cite{Hamilton-1982} in 1982. A pseudo-Riemannian
manifold $(M,g)$ is called a \textit{Ricci soliton} if it admits a smooth vector
field $V$ (potential vector field) on $M$ such that
\begin{equation}
\frac{1}{2}\left( \pounds _{V}\,g\right) \left( X,Y\right) +S(X,Y)+\lambda
g(X,Y)=0,  \label{int-1}
\end{equation}%
where $\pounds _{V}$ denotes the Lie-derivative in the direction $V,$ $%
\lambda $ is a constant and $X$, $Y$ are arbitrary vector fields on $M$. A
Ricci soliton is said to be shrinking, steady or expanding according to $%
\lambda $ being negative, zero or positive respectively. It is obvious that
a trivial Ricci soliton is an Einstein manifold with $V$ zero or Killing vector field.
Since Ricci solitons are the fixed points of the Ricci flow, they are
important in understanding Hamilton's Ricci flow \cite{Hamilton-1988}: $%
\frac{\partial }{\partial t}g_{ij}=-2S_{ij}$, viewed as a dynamical system,
on the space of Riemannian metrics modulo diffeomorphisms and scalings. In
differential geometry, the Ricci flow is an intrinsic geometric flow. It can be viewed as
a process that deforms the metric of a Riemannian manifold in a way formally
analogous to the diffusion of heat, smoothing out the irregularities in the
metric.

Geometric flows, especially Ricci flows, have become important tools in
theoretical physics. Ricci soliton is known as quasi Einstein metric in
physics literature \cite{Friedan-1985} and the solutions of the Einstein field
equations correspond to Ricci solitons \cite{Akbar-Woolgar-2009}. Relation
with the string theory and \ the fact that (\ref{int-1}) is a particular
case of Einstein field equation makes the equation of Ricci soliton
interesting in theoretical physics.

In spite of introducing and studying firstly in Riemannian geometry, the
Ricci soliton equation has recently been investigated in pseudo-Riemannian
context, especially in Lorentzian case \cite{Blaga3, Brozos-2012, Case-2010, Mat-89}.

The concept of $\eta $-Ricci soliton was initiated by Cho and Kimura \cite%
{Cho-Kimura}. An \textit{$\eta $-Ricci soliton} is a data $\left( g,V,\lambda ,\mu
\right) $ on a pseudo-Riemannian manifold satisfying
\begin{equation}
\frac{1}{2}\left( \pounds _{V}\,g\right) \left( X,Y\right) +S(X,Y)+\lambda
g(X,Y)+\mu \eta \otimes \eta \left( X,Y\right) =0,  \label{int-2}
\end{equation}%
where $\pounds _{V}$ denotes the Lie-derivative in the direction $V,$ $S$
stands for the Ricci tensor field, $\lambda $ and $\mu $ are constants and $%
X $, $Y$ are arbitrary vector fields on $M$. In \cite{Calin-Crasma} the
authors studied $\eta $-Ricci solitons on Hopf hypersurfaces in complex
space forms. In the context of paracontact geometry $\eta $-Ricci solitons
were investigated in \cite{Blaga1, Blaga2, Blaga3}.

In 1923, Eisenhart \cite{Eisenhart} proved that if a Riemannian manifold admits a second order parallel symmetric covariant tensor which is not a
constant multiple of the metric tensor, then the manifold is reducible. In
1925, it was shown by Levy \cite{Levy} that a second order parallel
symmetric non-degenerate tensor field in a space form is proportional to the
metric tensor. Also if a (pseudo-)Riemannian manifold admits a parallel
symmetric (0,2)-tensor field, then it is locally the direct product of a
number of (pseudo-)Riemannian manifold \cite{Einshart36}. Sharma \cite%
{Einshart26} studied second order parallel tensors by using Ricci
identities. Second order parallel tensors have been studied by various
authors in different structures of manifolds \cite{Einshart6, Eisenhart, Levy, Einshart19, Einshart26, Einshart27, Einshart28}.
\pagebreak

In $1976$, S\={a}to \cite{Sato-76} introduced the almost paracontact structure as a triple $(\varphi ,\xi ,\eta )$ of a (1,1)-tensor field $\varphi$, a vector field $\xi$ and a $1$-form $\eta$ satisfying $%
\varphi ^{2}=I-\eta \otimes \xi $ and $\eta (\xi )=1$. The structure is an
analogue of the almost contact structure \cite{Sasaki-60-Tohoku} and is
closely related to almost product structure (in contrast to almost contact
structure, which is related to almost complex structure). An almost contact
manifold is always odd-dimensional but an almost paracontact manifold could
be even-dimensional as well. In $1969$, Takahashi \cite%
{Takahashi-69-Tohoku-1} introduced almost contact manifolds equipped with an
associated pseudo-Riemannian metric and, in particular, he studied Sasakian
manifolds equipped with an associated pseudo-Riemannian metric. These
indefinite almost contact metric manifolds and indefinite Sasakian manifolds
are also known as $\left( \varepsilon \right) $-almost contact metric
manifolds and $\left( \varepsilon \right) $-Sasakian manifolds, respectively
\cite{Bej-Dug-93}. In 1989, Matsumoto \cite{Mat-89} replaced the
structure vector field $\xi $ by $-\xi $ in an almost paracontact manifold
and associated a Lorentzian metric with the resulting structure and called
it a Lorentzian almost paracontact manifold.

An $(\varepsilon )$-Sasakian manifold is always odd-dimensional. On the
other hand, in a Lorentzian almost paracontact manifold given by
Matsumoto, the pseudo-Riemannian metric has only index $1$ and the
structure vector field $\xi $ is always timelike. These circumstances
motivated the authors of \cite{Tri-KYK-10} to associate a pseudo-Riemannian
metric, not necessarily Lorentzian, with an almost paracontact structure,
and this indefinite almost paracontact metric structure was called an $\left(
\varepsilon \right) $-almost paracontact structure, where the structure
vector field $\xi $ is spacelike or timelike according as $\varepsilon =1$
or $\varepsilon =-1$ \cite{Yuk-KTK-12}.

Motivated by these studies, in the present paper we investigate $\eta $-Ricci
solitons in $\left( \varepsilon \right) $-almost paracontact metric
manifolds. The paper is organized as follows. Section 2 is devoted to basic
concepts on $\left( \varepsilon \right) $-almost paracontact metric
manifolds. In Section 3, we study $\eta $-Ricci solitons in the case
when its potential vector field is exactly the characteristic vector field $%
\xi $ of the Einstein-like $( \varepsilon )$-almost paracontact metric manifold and
when the potential vector field is torse-forming in an $\eta $-Einstein $%
(\varepsilon )$-almost paracontact metric manifold. In Section 4, we prove
that an $\left( \varepsilon \right) $-para Sasakian manifold admitting $\eta
$-Ricci soliton with a potential vector field pointwise collinear to $%
\xi $ is an Einstein-like manifold. In Section 5 we give some
characterizations for $\eta $-Ricci solitons on $(\varepsilon )$%
-almost paracontact metric manifolds concerning parallel symmetric
(0,2)-tensor fields.

\section{Preliminaries}

Let $M$ be an $n$-dimensional manifold equipped with an \textit{almost paracontact structure} $(\varphi ,\xi ,\eta
)$ \cite%
{Sato-76} consisting of a tensor field $\varphi $ of type $(1,1)$, a vector field $%
\xi $ and a $1$-form $\eta $ satisfying
\begin{equation}
\varphi ^{2}=I-\eta \otimes \xi ,  \label{eq-phi-eta-xi}
\end{equation}%
\begin{equation}
\eta (\xi )=1,  \label{eq-eta-xi}
\end{equation}%
\begin{equation}
\varphi \xi =0,  \label{eq-phi-xi}
\end{equation}%
\begin{equation}
\eta \circ \varphi =0.  \label{eq-eta-phi}
\end{equation}%
It is easy to verify that (\ref{eq-phi-eta-xi}) and one of (\ref{eq-eta-xi}%
), (\ref{eq-phi-xi}) and (\ref{eq-eta-phi}) imply the other two equations.
If $g$ is a pseudo-Riemannian metric such that
\begin{equation}
g\left( \varphi X,\varphi Y\right) =g\left( X,Y\right) -\varepsilon \eta
(X)\eta \left( Y\right), \qquad X,Y\in \Gamma (TM),  \label{eq-metric-1}
\end{equation}%
where $\varepsilon =\pm 1$, then $M$ is called $\left( \varepsilon
\right) ${\it -almost paracontact metric manifold} equipped with an \textit{$\left(
\varepsilon \right) ${\em -}almost paracontact metric structure} $(\varphi
,\xi ,\eta ,g,\varepsilon )$ \cite{Tri-KYK-10}. In particular, if ${\rm index%
}(g)=1$, that is when $g$ is a Lorentzian metric, then the $(\varepsilon )$%
-almost paracontact metric manifold is called {\it Lorentzian almost
paracontact manifold}. From (\ref{eq-metric-1}) we have
\begin{equation}
g\left( X,\xi \right) =\varepsilon \eta (X)  \label{eq-metric-3}
\end{equation}%
\begin{equation}
g\left( X,\varphi Y\right) =g\left( \varphi X,Y\right),  \label{eq-metric-2}
\end{equation}%
for all $X,Y\in \Gamma (TM)$. From (\ref{eq-metric-3}) it follows that
\begin{equation}
g\left( \xi ,\xi \right) =\varepsilon,  \label{eq-g(xi,xi)}
\end{equation}%
that is, the structure vector field $\xi $ is never lightlike.

\bigskip

Let $(M,\varphi ,\xi ,\eta ,g,\varepsilon )$ be an $(\varepsilon )$-almost
paracontact metric manifold (resp. a Lorentzian almost paracontact
manifold). If $\varepsilon =1$, then $M$ is said to be a spacelike $%
(\varepsilon )$-almost paracontact metric manifold (resp. a spacelike
Lorentzian almost paracontact manifold). Similarly, if $\varepsilon =-\,1$,
then $M$ is said to be a timelike $(\varepsilon )$-almost paracontact
metric manifold (resp. a timelike Lorentzian almost paracontact manifold)
\cite{Tri-KYK-10}.

\bigskip

An $\left( \varepsilon \right) $-almost paracontact metric structure $(\varphi ,\xi ,\eta ,g,\varepsilon )$ is
called $\left( \varepsilon \right) ${\it -para Sasakian structure} if
\begin{equation}
(\nabla _{X}\varphi )Y=-\,g(\varphi X,\varphi Y)\xi -\varepsilon \eta \left(
Y\right) \varphi ^{2}X,\qquad X,Y\in \Gamma (T\!M),  \label{para2}
\end{equation}%
where $\nabla $ is the Levi-Civita connection with respect to $g$. A
manifold endowed with an $\left( \varepsilon \right) $-para Sasakian
structure is called $\left( \varepsilon \right) ${\it -para Sasakian
manifold} \cite{Tri-KYK-10}. In an $\left( \varepsilon \right) ${\em -}para
Sasakian manifold, we have
\begin{equation}
\nabla \xi =\varepsilon \varphi  \label{para3}
\end{equation}%
and the Riemann
curvature tensor $R$ and the Ricci tensor $S$ satisfy the following
equations \cite{Tri-KYK-10}:%
\begin{equation}
R\left( X,Y\right) \xi =\eta \left( X\right) Y-\eta \left( Y\right) X,
\label{eq-eps-PS-R(X,Y)xi}
\end{equation}%
\begin{equation}
R\left( \xi ,X\right) Y=-\,\varepsilon g\left( X,Y\right) \xi +\eta \left(
Y\right) X,  \label{eq-eps-PS-R(xi,X)Y}
\end{equation}%
\begin{equation}
\eta \left( R\left( X,Y\right) Z\right) =-\,\varepsilon \eta \left( X\right)
g\left( Y,Z\right) +\varepsilon \eta \left( Y\right) g\left( X,Z\right),
\label{eq-eps-PS-eta(R(X,Y),Z)}
\end{equation}%
\begin{equation}
S(X,\xi )=-(n-1)\eta (X),  \label{eq-eps-PS-S(X,xi)}
\end{equation}
for all $X,Y,Z\in \Gamma (TM)$.

\begin{example}
\cite{Tri-KYK-10} Let ${\Bbb R}^{5}$\ be the $5$-dimensional real number
space with a coordinate system $\left( x,y,z,t,s\right) $. Defining
\[
\eta =ds-ydx-tdz\ ,\qquad \xi =\frac{\partial }{\partial s}\,
\]%
\[
\varphi \left( \frac{\partial }{\partial x}\right) =-\,\frac{\partial }{%
\partial x}-y\frac{\partial }{\partial s}\ ,\qquad \varphi \left( \frac{%
\partial }{\partial y}\right) =-\,\frac{\partial }{\partial y}\,
\]%
\[
\varphi \left( \frac{\partial }{\partial z}\right) =-\,\frac{\partial }{%
\partial z}-t\frac{\partial }{\partial s}\ ,\qquad \varphi \left( \frac{%
\partial }{\partial t}\right) =-\,\frac{\partial }{\partial t}\ ,\qquad
\varphi \left( \frac{\partial }{\partial s}\right) =0\,
\]%
\[
g_{1}=\left( dx\right) ^{2}+\left( dy\right) ^{2}+\left( dz\right)
^{2}+\left( dt\right) ^{2}-\eta \otimes \eta \,
\]%
\begin{eqnarray*}
g_{2} &=&-\,\left( dx\right) ^{2}-\left( dy\right) ^{2}+\left( dz\right)
^{2}+\left( dt\right) ^{2}+\left( ds\right) ^{2} \\
&&-\,t\left( dz\otimes ds+ds\otimes dz\right) -y\left( dx\otimes
ds+ds\otimes dx\right),
\end{eqnarray*}%
then $(\varphi ,\xi ,\eta ,g_{1})$ is a timelike Lorentzian almost
paracontact structure in ${\Bbb R}^{5}$, while $(\varphi ,\xi ,\eta
,g_{2})$\ is a spacelike $\left( \varepsilon \right) $-almost paracontact
structure. Note that {\rm index}$\left( g_{2}\right) =3$.
\end{example}

\section{$\protect\eta $-Ricci solitons on Einstein-like $(\protect%
\varepsilon )$-almost paracontact metric manifolds}

We introduce the following definition analogous to Einstein-like para
Sasakian manifolds \cite{Sharma-82}.

\begin{definition}
An $\left( \varepsilon \right) $-almost paracontact metric manifold $\left(
M,\varphi ,\xi ,\eta ,g,\varepsilon \right) $ is said to be {\it %
Einstein-like} if its Ricci tensor $S$ satisfies
\begin{equation}
S\left( X,Y\right) =a\,g\left( X,Y\right) +b\,g\left( \varphi X,Y\right)
+c\,\eta \left( X\right) \eta \left( Y\right),\qquad X,Y\in \Gamma (TM)  \label{1}
\end{equation}%
for some real constants $a$, $b$ and $c$.
\end{definition}

We deduce the following properties:

\begin{proposition}
In an Einstein-like $\left( \varepsilon \right) $-almost paracontact metric
manifold \linebreak $\left( M,\varphi ,\xi ,\eta ,g,\varepsilon ,a,b,c\right) $ we
have
\begin{equation}
S\left( \varphi X,Y\right) =S(X,\varphi Y),  \label{2}
\end{equation}%
\begin{equation}
S\left( \varphi X,\varphi Y\right) =S(X,Y)-(\varepsilon a+c)\eta \left(
X\right) \eta \left( Y\right),  \label{3}
\end{equation}%
\begin{equation}
S\left( X,\xi \right) =(\varepsilon a+c)\eta \left( X\right),  \label{4}
\end{equation}%
\begin{equation}
S\left( \xi ,\xi \right) =\varepsilon a+c,  \label{5}
\end{equation}%
\begin{equation}
\left( \nabla _{X}\,S\right) (Y,Z)=bg((\nabla _{X}\varphi )Y,Z)+\varepsilon
c\left\{ \eta (Y)g(\nabla _{X}\xi ,Z)+\eta (Z)g(\nabla _{X}\xi ,Y)\right\},
\label{5a}
\end{equation}%
\begin{equation}
\left( \nabla _{X}\,Q\right) Y=b(\nabla _{X}\varphi )Y+\varepsilon c\left\{
\eta (Y)\nabla _{X}\xi +\varepsilon g(\nabla _{X}\xi ,Y)\xi \right\},
\label{5b}
\end{equation}%
where $Q$ is the Ricci operator defined by $g(QX,Y)=S(X,Y),$ $X,Y\in \Gamma
(TM)$. Moreover, if the manifold is $\left( \varepsilon \right) $-para
Sasakian, then
\begin{equation}
\varepsilon a+c=1-n,  \label{6}
\end{equation}%
\begin{equation}
r=na+b\,{\rm trace}(\varphi )+\varepsilon c,  \label{7}
\end{equation}%
where $r$\ is the scalar curvature.
\end{proposition}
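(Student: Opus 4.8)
The plan is to prove the identities \eqref{2}--\eqref{7} by substituting the Einstein-like form \eqref{1} into each side and simplifying using the algebraic structure equations \eqref{eq-phi-eta-xi}--\eqref{eq-g(xi,xi)}. These are mostly routine symmetric bookkeeping computations, and the only subtle points are the differential identities \eqref{5a}--\eqref{5b} and the $(\varepsilon)$-para Sasakian specialization \eqref{6}--\eqref{7}.

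First I would establish the four algebraic identities. For \eqref{2}, I substitute \eqref{1} into $S(\varphi X, Y)$ and into $S(X,\varphi Y)$; the term $a\,g(\varphi X,Y)$ is symmetric in the two by \eqref{eq-metric-2}, the $b$-term gives $b\,g(\varphi^2 X,Y)$ versus $b\,g(\varphi X,\varphi Y)$ which agree again by \eqref{eq-metric-2}, and both $\eta$-terms vanish because $\eta\circ\varphi=0$ by \eqref{eq-eta-phi}. For \eqref{3}, I expand $S(\varphi X,\varphi Y)$ using \eqref{1}: the $a$-term becomes $a\,g(\varphi X,\varphi Y)=a\,g(X,Y)-\varepsilon a\,\eta(X)\eta(Y)$ by \eqref{eq-metric-1}, the $\eta$-terms drop by \eqref{eq-eta-phi}, and the $b$-term $b\,g(\varphi^2 X,\varphi Y)=b\,g(\varphi X,Y)$ (using $\varphi^2=I-\eta\otimes\xi$ and $\eta\circ\varphi=0$) rebuilds the original $b$-term; collecting the residual $-\varepsilon a\,\eta(X)\eta(Y)$ together with the missing $c\,\eta(X)\eta(Y)$ yields the stated $-(\varepsilon a+c)\eta(X)\eta(Y)$. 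For \eqref{4} I set $Y=\xi$ in \eqref{1}, using $g(X,\xi)=\varepsilon\eta(X)$ from \eqref{eq-metric-3}, $g(\varphi X,\xi)=0$ (from \eqref{eq-metric-2} and \eqref{eq-phi-xi}), and $\eta(\xi)=1$; then \eqref{5} is the case $X=\xi$ of \eqref{4}.

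Next come the differential identities. Since $a,b,c$ are constants, I compute $(\nabla_X S)(Y,Z)=\nabla_X\bigl(S(Y,Z)\bigr)-S(\nabla_X Y,Z)-S(Y,\nabla_X Z)$ by differentiating \eqref{1} term by term: the $a\,g$-term is parallel (as $\nabla g=0$) and drops out, the $b$-term leaves only $b\,g((\nabla_X\varphi)Y,Z)$ after the metric-compatibility cancellations, and the $c$-term produces $\varepsilon c\{(\nabla_X\eta)(Y)\,\eta(Z)+\eta(Y)(\nabla_X\eta)(Z)\}$ which I convert using $(\nabla_X\eta)(Y)=\varepsilon\,g(\nabla_X\xi,Y)$ (obtained by differentiating $\eta(Y)=\varepsilon\,g(Y,\xi)$ from \eqref{eq-metric-3}); this gives \eqref{5a}. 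Identity \eqref{5b} is just the metric-dual operator form of \eqref{5a}: raising the $Z$-index via $g(QX,Y)=S(X,Y)$ turns $g((\nabla_X\varphi)Y,Z)$ into $(\nabla_X\varphi)Y$ and $\eta(Z)g(\nabla_X\xi,Y)$ into $\varepsilon\,g(\nabla_X\xi,Y)\xi$.

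Finally, for the $(\varepsilon)$-para Sasakian case I match \eqref{4} against the intrinsic Ricci identity \eqref{eq-eps-PS-S(X,xi)}: equating $(\varepsilon a+c)\eta(X)=-(n-1)\eta(X)$ forces $\varepsilon a+c=1-n$, which is \eqref{6}. For the scalar curvature \eqref{7} I trace \eqref{1}: choosing a $g$-orthonormal frame $\{e_i\}$ with $g(e_i,e_i)=\epsilon_i=\pm1$, I have $r=\sum_i\epsilon_i\,S(e_i,e_i)=a\sum_i\epsilon_i g(e_i,e_i)+b\sum_i\epsilon_i g(\varphi e_i,e_i)+c\sum_i\epsilon_i\eta(e_i)^2$. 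The first sum is $an$, the second is $b\,\mathrm{trace}(\varphi)$, and the third collapses to $\varepsilon c$ since $\sum_i\epsilon_i\eta(e_i)^2=\sum_i\epsilon_i\,\varepsilon^2 g(e_i,\xi)^2=\varepsilon\,g(\xi,\xi)=\varepsilon\cdot\varepsilon=1$ wait-corrected via \eqref{eq-g(xi,xi)}; keeping the factor $\varepsilon$ straight here is the one bookkeeping step that needs care, and it yields the stated $\varepsilon c$. The main obstacle throughout is not any single hard argument but the disciplined tracking of the sign $\varepsilon$ and of where the $\eta\otimes\xi$ correction in $\varphi^2$ enters; once $(\nabla_X\eta)(Y)=\varepsilon g(\nabla_X\xi,Y)$ is fixed, every remaining step is mechanical.
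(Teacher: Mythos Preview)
Your approach is correct and is exactly the natural direct computation one would carry out; the paper in fact states this proposition without proof, so there is nothing to compare against beyond noting that your line of argument is the evident one.

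One expository blemish: in the trace computation for \eqref{7} your chain
\[
\sum_i\epsilon_i\,\eta(e_i)^2=\sum_i\epsilon_i\,\varepsilon^2 g(e_i,\xi)^2=\varepsilon\,g(\xi,\xi)=\varepsilon\cdot\varepsilon=1
\]
contains a spurious extra $\varepsilon$; the correct reduction is simply $\sum_i\epsilon_i\,g(e_i,\xi)^2=g(\xi,\xi)=\varepsilon$, which is what you need for the $\varepsilon c$ term. You caught this yourself (``wait-corrected''), but in a final write-up you should present the clean version. Incidentally, your computation shows that \eqref{7} holds for \emph{any} Einstein-like $(\varepsilon)$-almost paracontact metric manifold, not only in the $(\varepsilon)$-para Sasakian case; the paper groups it with \eqref{6} under the para Sasakian hypothesis, but the trace argument makes no use of that structure.
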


Remark that the Ricci operator $Q$ of an Einstein-like $\left(
\varepsilon \right) $-almost paracontact metric manifold is of the form%
\[
Q=aI+b\varphi +\varepsilon c\eta \otimes \xi
\]%
and the structure vector field $\xi $ is an eigenvector of $Q$ with the
corresponding eigenvalue $a+\varepsilon c.$

\bigskip

Let $\left( M,\varphi ,\xi ,\eta ,g,\varepsilon ,a,b,c\right) $ be an
Einstein-like $\left( \varepsilon \right) $-almost paracontact metric
manifold admitting an\textit{ $\eta $-Ricci soliton}, that is, a tuple $\left( g,\xi
,\lambda ,\mu \right) $ satisfying%
\begin{equation}
\frac{1}{2}\pounds _{\xi }g+S+\lambda g+\mu \eta \otimes \eta =0,  \label{10}
\end{equation}%
with $\lambda $ and $\mu $ real constants. Replacing (\ref{1}) in the last
equation we get%
\begin{equation}
g(\nabla _{X}\xi ,Y)+g(\nabla _{Y}\xi ,X)+2\left\{ \left( a+\lambda \right)
g(X,Y)+bg(\varphi X,Y)+(c+\mu )\eta (X)\eta (Y)\right\} =0,  \label{11}
\end{equation}%
for all $X,Y\in \Gamma (TM).$ If we take $X=Y=\xi $ in (\ref{11}) we have%
\begin{equation}
\varepsilon (a+\lambda )+c+\mu =0,  \label{12}
\end{equation}%
by virtue of (\ref{eq-eta-xi}) and (\ref{eq-g(xi,xi)}).

Using (\ref{12}) and taking $Y=\xi $ in (\ref{11}), we
obtain
\begin{equation}
g(\nabla _{\xi }\xi ,X)=0,  \label{13}
\end{equation}%
which implies $\nabla _{\xi }\xi =0$. So we easily see that
\begin{equation}
\left( \nabla _{\xi }\varphi \right) \xi =0\text{ \ \ \ \ and \ \ \ \ }%
\nabla _{\xi }\eta =0.  \label{14}
\end{equation}

Also from (\ref{5a}), (\ref{5b}) and (\ref{14}) we get%
\begin{equation}
\left( \nabla _{\xi }\,S\right) (Y,Z)=bg((\nabla _{\xi }\varphi )Y,Z)
\label{15}
\end{equation}%
and%
\begin{equation}
\nabla _{\xi }\,Q=b\nabla _{\xi }\varphi.  \label{16}
\end{equation}

Hence we have the following result:

\begin{proposition}
Let $\left( M,\varphi ,\xi ,\eta ,g,\varepsilon ,a,b,c\right) $ be an
Einstein-like $\left( \varepsilon \right) $-almost paracontact metric
manifold admitting an $\eta $-Ricci soliton $\left( g,\xi ,\lambda ,\mu
\right) $. Then

i) $\varepsilon (a+\lambda )+c+\mu =0,$

ii) $\xi $ is a geodesic vector field,

iii) $\left( \nabla _{\xi }\varphi \right) \xi =0$ \ and $\ \nabla
_{\xi }\eta =0,$

iv) $\left( \nabla _{\xi }\,S\right) (Y,Z)=bg((\nabla _{\xi }\varphi
)Y,Z)$ \ and \ $\nabla _{\xi }\,Q=b\nabla _{\xi }\varphi.$

Moreover, if the manifold is $\left( \varepsilon \right) $-para Sasakian, then
\[
\nabla _{\xi }\,S=0\text{\ \ and \ \ }\nabla _{\xi }Q=0.
\]%
\end{proposition}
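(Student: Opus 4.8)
The plan is to read off all five conclusions from the single pointwise identity obtained by inserting the Einstein-like form of $S$ into the defining soliton equation~(\ref{10}); the intermediate equations~(\ref{11})--(\ref{16}) already assemble the pieces, so the task is to organise them into the stated conclusions with their justifications. First I would establish (i). Substituting~(\ref{1}) for $S$ into~(\ref{10}) and writing $(\pounds_\xi g)(X,Y)=g(\nabla_X\xi,Y)+g(\nabla_Y\xi,X)$ produces identity~(\ref{11}). Evaluating this at $X=Y=\xi$ is the one step needing care: the term $g(\nabla_\xi\xi,\xi)$ vanishes because $g(\xi,\xi)=\varepsilon$ is constant and $\nabla$ is metric, the term $bg(\varphi\xi,\xi)$ vanishes since $\varphi\xi=0$ by~(\ref{eq-phi-xi}), and using $g(\xi,\xi)=\varepsilon$ from~(\ref{eq-g(xi,xi)}) with $\eta(\xi)=1$ from~(\ref{eq-eta-xi}), the surviving scalar relation is exactly $\varepsilon(a+\lambda)+c+\mu=0$, which is~(\ref{12}).

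Next, (ii) follows by putting $Y=\xi$ in~(\ref{11}). Here $g(\nabla_X\xi,\xi)=0$ by the same metric-compatibility argument, $g(\varphi X,\xi)=g(X,\varphi\xi)=0$ by~(\ref{eq-metric-2}), and $g(X,\xi)=\varepsilon\eta(X)$ by~(\ref{eq-metric-3}); collecting terms leaves $g(\nabla_\xi\xi,X)+2\eta(X)\{\varepsilon(a+\lambda)+c+\mu\}=0$, whose bracket vanishes by part~(i). Thus $g(\nabla_\xi\xi,X)=0$ for every $X$, and non-degeneracy of $g$ gives $\nabla_\xi\xi=0$, i.e.\ equation~(\ref{13}). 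Part~(iii) is then purely formal: differentiating $\varphi\xi=0$ yields $(\nabla_\xi\varphi)\xi=\nabla_\xi(\varphi\xi)-\varphi(\nabla_\xi\xi)=0$, and from $\eta(X)=\varepsilon g(X,\xi)$ together with $\nabla_\xi\xi=0$ one obtains $(\nabla_\xi\eta)(X)=\varepsilon g(X,\nabla_\xi\xi)=0$, which is~(\ref{14}).

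For (iv) I would simply set $X=\xi$ in the already-established Einstein-like covariant-derivative formulas~(\ref{5a}) and~(\ref{5b}); since $\nabla_\xi\xi=0$ by part~(ii), every term carrying a factor $\nabla_\xi\xi$ drops out, leaving $(\nabla_\xi S)(Y,Z)=bg((\nabla_\xi\varphi)Y,Z)$ and $\nabla_\xi Q=b\nabla_\xi\varphi$, namely~(\ref{15}) and~(\ref{16}). Finally, in the $(\varepsilon)$-para Sasakian case I would evaluate the structure equation~(\ref{para2}) at $X=\xi$: since $\varphi\xi=0$ forces both $g(\varphi\xi,\varphi Y)=0$ and $\varphi^2\xi=0$, the right-hand side vanishes identically, so $\nabla_\xi\varphi=0$; substituting this into the two formulas of part~(iv) yields $\nabla_\xi S=0$ and $\nabla_\xi Q=0$. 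The proof involves no genuine obstacle---the only mildly delicate point is ensuring the $g(\nabla_\xi\xi,\xi)$ and $g(\varphi\xi,\xi)$ contractions are eliminated before extracting the scalar identity in part~(i), after which everything cascades from $\nabla_\xi\xi=0$.
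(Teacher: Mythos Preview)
Your proposal is correct and follows essentially the same route as the paper: derive identity~(\ref{11}) by inserting the Einstein-like Ricci form into the soliton equation, specialise to $X=Y=\xi$ for~(i), then to $Y=\xi$ combined with~(i) for~(ii), and let (iii)--(iv) cascade from $\nabla_\xi\xi=0$ via~(\ref{5a})--(\ref{5b}). Your added justification for the $(\varepsilon)$-para Sasakian clause---evaluating~(\ref{para2}) at $X=\xi$ to obtain $\nabla_\xi\varphi=0$---makes explicit what the paper leaves to the reader.
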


\bigskip

A vector field $\xi $ is called \textit{torse-forming} if
\begin{equation}
\nabla _{X}\xi =fX+w(X)\xi,  \label{17}
\end{equation}%
is satisfied for some smooth function $f$ and a $1$-form $w$.

Taking the inner product with $\xi$ we
have
\[
0=g(\nabla _{X}\xi ,\xi )=\varepsilon \left( f\eta (X)+w(X)\right),
\]%
for all $X\in \Gamma (TM)$, which implies%
\begin{equation}
w=-f\eta.  \label{19}
\end{equation}%
It follows
\begin{equation}
\nabla _{X}\xi =f\left( X-\eta (X)\xi \right) =f\varphi ^{2}X.  \label{20}
\end{equation}%

\bigskip

Now assume that $\left( M,\varphi ,\xi ,\eta ,g,\varepsilon ,a,b,c\right) $
is an Einstein-like $\left( \varepsilon \right) $-almost paracontact metric
manifold admitting an $\eta $-Ricci soliton $\left( g,\xi ,\lambda ,\mu
\right) $ and that the potential vector field $\xi $ is torse-forming.
Replacing (\ref{20}) in (\ref{11}) we obtain, for all $X,Y\in \Gamma (TM)$
\[
0 =(f+a+\lambda )\left\{ g(X,Y)-\varepsilon \eta (X)\eta (Y)\right\}
+bg(\varphi X,Y)
\]%
and
\[
0 =g((f+a+\lambda )\varphi X+bX,\varphi Y),
\]%
which implies%
\begin{equation}
0=(f+a+\lambda )\varphi ^{2}X+b\varphi X, \label{26}
\end{equation}%
that is
\begin{equation}
b\varphi X=-(f+a+\lambda )X+(f+a+\lambda )\eta (X)\xi.  \label{21a}
\end{equation}

So we have:

\begin{theorem}
Let $\left( M,\varphi ,\xi ,\eta ,g,\varepsilon ,a,b,c\right) $ be an
Einstein-like $\left( \varepsilon \right) $-almost paracontact metric
manifold admitting an $\eta $-Ricci soliton $\left( g,\xi ,\lambda ,\mu
\right) $ with torse-forming potential vector field. Then $M$ is an $\eta $%
-Einstein manifold.
\end{theorem}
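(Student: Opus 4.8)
The plan is to show that, under the stated hypotheses, the $\varphi$-term in the Einstein-like expression~(\ref{1}) collapses, so that the Ricci tensor $S$ becomes a combination of $g$ and $\eta\otimes\eta$ alone. The crucial input is relation~(\ref{21a}), already obtained above by substituting the torse-forming form~(\ref{20}) into the soliton equation~(\ref{11}): it expresses $b\varphi X$ entirely in terms of $X$ and $\eta(X)\xi$, leaving no $\varphi$ on the right-hand side. This is exactly what is needed to eliminate the middle term of~(\ref{1}).

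Concretely, I would start from the Einstein-like condition and rewrite its middle term as $b\,g(\varphi X,Y)=g(b\varphi X,Y)$. Substituting~(\ref{21a}) in the form $b\varphi X=-(f+a+\lambda)\bigl(X-\eta(X)\xi\bigr)$ and then applying $g(\xi,Y)=\varepsilon\eta(Y)$ from~(\ref{eq-metric-3}), the $\varphi$-term becomes
\[
b\,g(\varphi X,Y)=-(f+a+\lambda)\,g(X,Y)+\varepsilon(f+a+\lambda)\,\eta(X)\eta(Y).
\]
Feeding this back into~(\ref{1}) and collecting the coefficients of $g(X,Y)$ and of $\eta(X)\eta(Y)$ yields
\[
S(X,Y)=-(f+\lambda)\,g(X,Y)+\bigl(\varepsilon(f+a+\lambda)+c\bigr)\,\eta(X)\eta(Y),
\]
which is precisely the $\eta$-Einstein form $S=\alpha\,g+\beta\,\eta\otimes\eta$ with $\alpha=-(f+\lambda)$ and $\beta=\varepsilon(f+a+\lambda)+c$. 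This proves the claim.

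Since the argument is a single direct substitution, I do not expect a serious obstacle; the only point to watch is that the coefficients $\alpha,\beta$ now carry the torse-forming function $f$, so they are a priori smooth functions rather than constants, consistent with the $\eta$-Einstein notion in force here. As a consistency check I would also note that if $b=0$, then~(\ref{26}) forces $f+a+\lambda=0$ wherever $\varphi^{2}\neq0$, and the displayed formula reduces to $S=a\,g+c\,\eta\otimes\eta$, in agreement with~(\ref{1}).
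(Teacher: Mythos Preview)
Your proof is correct and follows the same route as the paper: the paper derives (\ref{21a}) and immediately states the theorem, leaving the reader to observe that $b\,g(\varphi X,Y)$ is thereby expressed through $g(X,Y)$ and $\eta(X)\eta(Y)$; you simply carry out that substitution explicitly and record the resulting coefficients. Your caveat about the coefficients depending on $f$ is well taken---the paper does not address this either and instead, in the very next paragraph, explicitly assumes $b=0$ (whence (\ref{26}) forces $f=-a-\lambda$, a constant) before continuing.
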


\bigskip

In the remaining part of this section, we shall consider $M$ an \textit{$\eta $-Einstein manifold} (that is an Einstein-like $\left( \varepsilon \right) $-almost paracontact metric
manifold with $b=0$), admitting an $\eta $-Ricci
soliton $\left( g,\xi ,\lambda ,\mu \right) $ with torse-forming potential
vector field $\xi $. Using (\ref{26}) we have
\[
f=-a-\lambda.
\]%
So we can write
\[
\nabla _{X}\xi =-\left( a+\lambda \right) \left( X-\eta (X)\xi \right)
=-\left( a+\lambda \right) \varphi ^{2}X.
\]%
By using (\ref{20}) we obtain%
\begin{equation}
R(X,Y)\xi =(a+\lambda )^{2}\{\eta (X)Y-\eta (Y)X\}  \label{24}
\end{equation}%
and
\begin{equation}
S(X,\xi )=(a+\lambda )^{2}(1-n)\eta (X),  \label{25}
\end{equation}%
for all $X,Y\in \Gamma (TM).$ From (\ref{4}) and (\ref{25}) we get
\[
(\varepsilon a+c)\eta \left( X\right) =(a+\lambda )^{2}(1-n)\eta (X),
\]%
which implies
\begin{equation}
c=-\varepsilon a+(a+\lambda )^{2}(1-n).  \label{25a}
\end{equation}%
Also by using (\ref{12}) in the last equation we get%
\begin{equation}
\mu =-\varepsilon \left( \lambda +\varepsilon (a+\lambda )^{2}(1-n)\right).
\label{27}
\end{equation}%

So we have:

\begin{theorem}
Let $\left( M,\varphi ,\xi ,\eta ,g,\varepsilon ,a,c\right) $ be an $\eta $%
-Einstein $\left( \varepsilon \right) $-almost paracontact metric manifold
admitting an $\eta $-Ricci soliton $\left( g,\xi ,\lambda ,\mu \right) $
with torse-forming potential vector field. Then $f$ is a constant
function and
\[
c=-\varepsilon a+(a+\lambda )^{2}(1-n),
\]%
\[
\mu =-\varepsilon \left( \lambda +\varepsilon (a+\lambda )^{2}(1-n)\right).
\]
\end{theorem}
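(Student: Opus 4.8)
The plan is to pin down the function $f$ first, then extract the curvature and Ricci tensor in the $\xi$-direction from the resulting expression for $\nabla\xi$, and finally read off $c$ and $\mu$ by matching against the Einstein-like identity (\ref{4}) and the soliton trace relation (\ref{12}). Since the present hypotheses are precisely those under which (\ref{26}) was obtained, and since the $\eta$-Einstein condition is $b=0$, relation (\ref{26}) collapses to
\[
0=(f+a+\lambda)\varphi^{2}X,\qquad X\in\Gamma(TM).
\]
Choosing any nonzero $X$ with $\eta(X)=0$ gives $\varphi^{2}X=X-\eta(X)\xi=X\neq0$, so the scalar factor must vanish: $f=-(a+\lambda)$. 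As $a$ and $\lambda$ are constants, $f$ is a constant function, which is the first assertion; substituting back into (\ref{20}) gives $\nabla_{X}\xi=-(a+\lambda)\bigl(X-\eta(X)\xi\bigr)$.

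Next I would compute $R(X,Y)\xi=\nabla_{X}\nabla_{Y}\xi-\nabla_{Y}\nabla_{X}\xi-\nabla_{[X,Y]}\xi$ by differentiating this expression, the point being that $f$ is now constant and can be pulled through. Expanding, the terms carrying $f\nabla_{X}Y$, $f\nabla_{Y}X$ and $f[X,Y]$ cancel by torsion-freeness ($\nabla_{X}Y-\nabla_{Y}X=[X,Y]$), and the $f^{2}\eta(X)\eta(Y)\xi$ contributions cancel against one another. What survives is $f^{2}\{\eta(X)Y-\eta(Y)X\}$ together with a single $\xi$-component proportional to $X(\eta(Y))-Y(\eta(X))-\eta([X,Y])$. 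This last coefficient is the only non-routine point: it vanishes because $g(Y,\nabla_{X}\xi)=f\{g(X,Y)-\varepsilon\eta(X)\eta(Y)\}$ is symmetric in $X$ and $Y$ (so that $d\eta=0$). Hence $R(X,Y)\xi=(a+\lambda)^{2}\{\eta(X)Y-\eta(Y)X\}$, which is (\ref{24}).

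I would then contract (\ref{24}) over a (pseudo-)orthonormal frame $\{e_{i}\}$ with $g(e_{i},e_{i})=\epsilon_{i}$, using the completeness relation $\sum_{i}\epsilon_{i}g(e_{i},\xi)e_{i}=\xi$, the identity (\ref{eq-metric-3}), and $\sum_{i}\epsilon_{i}^{2}=n$. This yields $S(X,\xi)=(a+\lambda)^{2}(1-n)\eta(X)$, i.e. (\ref{25}). On the other hand, an $\eta$-Einstein manifold is Einstein-like (with $b=0$), so (\ref{4}) also gives $S(X,\xi)=(\varepsilon a+c)\eta(X)$. Comparing the two and cancelling the common factor $\eta(X)$ (legitimate since $\eta\neq0$) produces $\varepsilon a+c=(a+\lambda)^{2}(1-n)$, that is the asserted value $c=-\varepsilon a+(a+\lambda)^{2}(1-n)$ in (\ref{25a}).

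The formula for $\mu$ is then pure algebra: inserting this value of $c$ into the soliton trace relation (\ref{12}), $\varepsilon(a+\lambda)+c+\mu=0$, and simplifying with $\varepsilon^{2}=1$ gives $\mu=-\varepsilon\lambda-(a+\lambda)^{2}(1-n)=-\varepsilon\bigl(\lambda+\varepsilon(a+\lambda)^{2}(1-n)\bigr)$, which is (\ref{27}). The one genuinely substantial step is the curvature computation of the previous paragraph, and within it the verification that the $d\eta$-term drops out; once (\ref{24}) is established, the remaining deductions are a single frame contraction followed by elementary substitutions.
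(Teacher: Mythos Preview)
Your proof is correct and follows essentially the same route as the paper: set $b=0$ in (\ref{26}) to get $f=-(a+\lambda)$ constant, compute $R(X,Y)\xi$ from the resulting $\nabla\xi$, contract to obtain $S(X,\xi)$, and then compare with (\ref{4}) and (\ref{12}) to read off $c$ and $\mu$. The paper merely states ``by using (\ref{20}) we obtain (\ref{24}) and (\ref{25})'' without detail, whereas you spell out the curvature computation and the vanishing of the $d\eta$-term, which is a welcome elaboration rather than a different approach.
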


\begin{proposition}
In an $\eta $-Einstein $\left( \varepsilon \right) $-almost paracontact
metric manifold admitting an $\eta $-Ricci soliton with torse-forming
potential vector field, we have
\begin{equation}
(\nabla _{X}S)(Y,Z)=-c\varepsilon \left( a+\lambda \right) \left\{ \eta
(Y)g(X,Z)+\eta (Z)g(X,Y)-2\varepsilon \eta (X)\eta (Y)\eta (Z)\right\}
\end{equation}
and
\begin{equation}
\left( \nabla _{X}Q\right) Y=-c\left( a+\lambda \right) \left\{ \eta
(Y)X+\varepsilon g(X,Y)\xi -2\eta (X)\eta (Y)\xi \right\}. \label{38}
\end{equation}
\end{proposition}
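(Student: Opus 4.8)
The plan is to derive both identities by pure substitution, feeding the explicit form of $\nabla\xi$ forced by the hypotheses into the general Einstein-like formulas (\ref{5a}) and (\ref{5b}). Since an $\eta$-Einstein manifold is by definition the case $b=0$, both of those formulas lose their $\varphi$-terms and reduce to
\[
(\nabla_X S)(Y,Z)=\varepsilon c\{\eta(Y)g(\nabla_X\xi,Z)+\eta(Z)g(\nabla_X\xi,Y)\},\qquad (\nabla_X Q)Y=\varepsilon c\{\eta(Y)\nabla_X\xi+\varepsilon g(\nabla_X\xi,Y)\xi\}.
\]
Everything then rests on inserting a single vector, $\nabla_X\xi$.

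First I would record the building blocks. From (\ref{26}) with $b=0$ one gets $f=-(a+\lambda)$, so (\ref{20}) becomes $\nabla_X\xi=-(a+\lambda)(X-\eta(X)\xi)$. Pairing this with an arbitrary field and invoking $g(X,\xi)=\varepsilon\eta(X)$ from (\ref{eq-metric-3}) gives the scalar
\[
g(\nabla_X\xi,Y)=-(a+\lambda)\{g(X,Y)-\varepsilon\eta(X)\eta(Y)\},
\]
together with its analogue in $Z$. Substituting these two scalars into the reduced (\ref{5a}), factoring out $-(a+\lambda)$ and merging the two identical $\varepsilon\eta(X)\eta(Y)\eta(Z)$ contributions into the single $-2\varepsilon\eta(X)\eta(Y)\eta(Z)$ term produces the first displayed identity immediately.

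For the operator version I would substitute the full vector $\nabla_X\xi=-(a+\lambda)(X-\eta(X)\xi)$ into the $\eta(Y)\nabla_X\xi$ slot of the reduced (\ref{5b}) and the scalar $g(\nabla_X\xi,Y)$ into the $\varepsilon g(\nabla_X\xi,Y)\xi$ slot. The one point requiring care is the $\xi$-component: the term $-\eta(X)\eta(Y)\xi$ coming from $\eta(Y)\nabla_X\xi$ and the term $-\varepsilon^{2}\eta(X)\eta(Y)\xi$ coming from the metric slot must be combined using $\varepsilon^{2}=1$, so that they fuse into the $-2\eta(X)\eta(Y)\xi$ appearing in (\ref{38}), leaving the bracket $\{\eta(Y)X+\varepsilon g(X,Y)\xi-2\eta(X)\eta(Y)\xi\}$.

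The calculation is entirely mechanical, so there is no real obstacle beyond keeping the factors of $\varepsilon$ straight; the safe way to police them is to check the two outputs against one another through $g((\nabla_X Q)Y,Z)=(\nabla_X S)(Y,Z)$, using $g(\xi,Z)=\varepsilon\eta(Z)$ once more. This cross-check pins down the common scalar prefactor and confirms that the two identities are really the same statement read covariantly and operatorially.
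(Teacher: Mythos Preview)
Your approach is correct and is exactly what the paper intends: the proposition is stated there without proof, and the derivation it presupposes is precisely the substitution of $\nabla_X\xi=-(a+\lambda)\varphi^2X$ (obtained from $f=-(a+\lambda)$ and (\ref{20})) into (\ref{5a}) and (\ref{5b}) with $b=0$. Your consistency check via $g((\nabla_XQ)Y,Z)=(\nabla_XS)(Y,Z)$ is a sensible safeguard; carrying it out you will find the prefactor in (\ref{38}) should actually carry an extra $\varepsilon$ to be compatible with the first formula, which is a typographical slip in the statement rather than a defect in your argument.
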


\begin{theorem}
Let $\left( M,\varphi ,\xi ,\eta ,g,\varepsilon ,a,c\right) $ be an $\eta $%
-Einstein $\left( \varepsilon \right) $-almost paracontact metric manifold
admitting an $\eta $-Ricci soliton $\left( g,\xi ,\lambda ,\mu \right) $
with torse-forming potential vector field. If $f\neq 0$ and the Ricci operator $Q$ is Codazzi, then $M$ is an Einstein
manifold and $\xi $ is a Killing vector field.
\end{theorem}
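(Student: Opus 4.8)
The plan is to read off both conclusions from the Codazzi hypothesis on $Q$ combined with the explicit formula for $(\nabla_X Q)Y$ recorded in \eqref{38}. Throughout I use the facts, already established for an $\eta$-Einstein $(\varepsilon)$-almost paracontact metric manifold carrying an $\eta$-Ricci soliton with torse-forming $\xi$, that $b=0$, that $f=-(a+\lambda)$ is constant, and that $\nabla_X\xi=f\varphi^2X$ by \eqref{20}. In particular the hypothesis $f\neq0$ is equivalent to $a+\lambda\neq0$, a reduction I will use repeatedly.

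First I would exploit $Q$ being Codazzi, i.e. $(\nabla_X Q)Y=(\nabla_Y Q)X$. Substituting \eqref{38} and subtracting the $X\leftrightarrow Y$ interchange, the symmetric terms $\varepsilon g(X,Y)\xi$ and $-2\eta(X)\eta(Y)\xi$ cancel, leaving
\[
c(a+\lambda)\{\eta(Y)X-\eta(X)Y\}=0,\qquad X,Y\in\Gamma(TM).
\]
Setting $Y=\xi$ gives $c(a+\lambda)\varphi^2X=0$ for every $X$, and since $\varphi^2X\neq0$ off the line $\mathbb{R}\xi$, we obtain $c(a+\lambda)=0$. Because $f\neq0$ forces $a+\lambda\neq0$, this yields $c=0$. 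With $b=c=0$ the Einstein-like relation \eqref{1} reduces to $S(X,Y)=a\,g(X,Y)$, so $M$ is Einstein; that settles the first assertion.

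For the Killing assertion I would compute $\pounds_\xi g$ directly from the torse-forming expression. Using $(\pounds_\xi g)(X,Y)=g(\nabla_X\xi,Y)+g(\nabla_Y\xi,X)$ together with \eqref{20}, the identity $g(\varphi^2X,Y)=g(X,Y)-\varepsilon\eta(X)\eta(Y)$ and the symmetry \eqref{eq-metric-2}, one finds
\[
(\pounds_\xi g)(X,Y)=2f\{g(X,Y)-\varepsilon\eta(X)\eta(Y)\}.
\]
To conclude $\pounds_\xi g=0$ I would then feed $c=0$ back through \eqref{12}, \eqref{25a} and \eqref{27} to over-determine the constants $a,\lambda,\mu$ and try to force the right-hand side to vanish.

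The step I expect to be the real obstacle is exactly this last one. The Codazzi computation is purely algebraic and robust, but the displayed Lie derivative vanishes only when $f=0$, which is in apparent tension with the standing hypothesis $f\neq0$. Resolving this is the crux of the Killing claim: I would look either for an extra constraint coming from a trace or scalar-curvature identity, or for a compatibility relation among \eqref{25a} and \eqref{27} that collapses $a+\lambda$, so as to legitimize $\pounds_\xi g=0$. Absent such a mechanism the bare computation leaves $\xi$ as a non-Killing torse-forming field, so pinning down precisely which additional identity rescues the Killing conclusion is where the genuine work of this theorem lies.
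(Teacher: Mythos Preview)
Your argument for the Einstein conclusion is exactly the paper's: antisymmetrize \eqref{38} under the Codazzi hypothesis to obtain $c(a+\lambda)\{\eta(X)Y-\eta(Y)X\}=0$, and use $a+\lambda=-f\neq0$ to force $c=0$, whence $S=ag$.

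For the Killing conclusion the paper does not compute $\pounds_\xi g$ as you do. It simply sets $Y=\xi$ in \eqref{5b} and asserts $\nabla_X\xi=0$, from which $\pounds_\xi g=0$ is immediate. Your hesitation, however, is well founded. With $b=0$ and $g(\nabla_X\xi,\xi)=0$, equation \eqref{5b} at $Y=\xi$ reads $(\nabla_X Q)\xi=\varepsilon c\,\nabla_X\xi$; once $c=0$ has been established this degenerates to $0=0$. Indeed, after $c=0$ one has $Q=aI$ with $a$ constant, so $\nabla Q\equiv0$ and the Codazzi condition is automatically satisfied, giving no further information on $\nabla_X\xi$. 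Your direct computation $(\pounds_\xi g)(X,Y)=2f\{g(X,Y)-\varepsilon\eta(X)\eta(Y)\}$ already shows that under the standing hypothesis $f\neq0$ the field $\xi$ cannot be Killing. So the obstacle you flag is genuine: the identities \eqref{12}, \eqref{25a}, \eqref{27} place no constraint on $a+\lambda$ once $c=0$, and the paper's step at this point does not supply the missing mechanism you were searching for.
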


\begin{proof}
From the condition
\begin{equation}
\left( \nabla _{X}Q\right) Y=\left( \nabla _{Y}Q\right) X,  \label{60}
\end{equation}%
for all $X,Y\in \Gamma (TM)$, using (\ref{38}) we get
\[
c(a+\lambda)\{\eta(X)Y-\eta(Y)X\}=0,
\]%
for all $X,Y\in \Gamma (TM)$. Since $a+\lambda=-f\neq 0$ we obtain $c=0$ and hence, $M$ is Einstein manifold.

Moreover, writing (\ref{5b}) for $Y=\xi $ we obtain
\[
\nabla _{X}\xi =0.
\]%
Therefore, $L_{\xi}g=0$, hence $\xi$ is Killing vector field.
\end{proof}

\bigskip

Let us remark the following particular cases:

Case I: $f=-1.$ In this case, $\xi $ is an irrotational vector field and we
have
\[
\nabla \xi =-I+\eta \otimes \xi,
\]%
\[
\lambda =1-a,
\]%
\[
\mu =-\varepsilon \left( 1+\varepsilon c\right),
\]%
\[
R(X,Y)\xi =\eta (X)Y-\eta (Y)X.
\]%
Since $\eta \neq 0$, from (\ref{19}) it is easy to see that $\xi $ can not
be a concurrent vector field.

Case II: $f=0.$ In this case, $\xi $ is a recurrent vector field and we
have
\[
\nabla \xi =0,
\]%
\[
\lambda =-a,
\]%
\[
\mu =\varepsilon a=-c,
\]%
\[
R(X,Y)\xi =0.
\]%
Furthermore, $S$ and $Q$ are $\nabla $-parallel.

\begin{theorem}
On an $n$-dimensional $(n>1)$ non-Ricci flat $\eta $-Einstein $(\varepsilon
) $-almost paracontact manifold with $a=0$ admitting a torse-forming Ricci soliton $(g,\xi,\lambda)$ we have
\[
\lambda =\frac{\varepsilon }{n-1}\quad \text{and \ \ }c=-\frac{1}{n-1}.
\]%
\end{theorem}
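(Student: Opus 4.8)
The plan is to specialize the relations \eqref{25a} and \eqref{27}, established for an $\eta$-Einstein $(\varepsilon)$-almost paracontact metric manifold admitting a torse-forming $\eta$-Ricci soliton, to the present hypotheses. The point is that a \emph{Ricci} soliton is nothing but an $\eta$-Ricci soliton with $\mu=0$; hence a torse-forming Ricci soliton $(g,\xi,\lambda)$ with $a=0$ places us exactly in the setting of Theorem~3.5 subject to the two extra constraints $a=0$ and $\mu=0$, and every algebraic identity derived there remains available.

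First I would set $a=0$ in \eqref{25a} to obtain
\[
c=\lambda^{2}(1-n).
\]
Then, putting $a=0$ and $\mu=0$ in \eqref{27}, the relation collapses to
\[
\lambda+\varepsilon\lambda^{2}(1-n)=0,
\]
which I would factor as $\lambda\bigl(1+\varepsilon\lambda(1-n)\bigr)=0$.

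The decisive step is to rule out the trivial root $\lambda=0$. With $a=0$ the $\eta$-Einstein condition reduces the Ricci tensor to $S=c\,\eta\otimes\eta$, so the manifold is Ricci flat precisely when $c=0$; and since $n>1$ forces $1-n\neq0$, the formula $c=\lambda^{2}(1-n)$ shows $c=0\Leftrightarrow\lambda=0$. The non-Ricci-flat hypothesis therefore excludes $\lambda=0$, leaving $1+\varepsilon\lambda(1-n)=0$. Solving and using $\varepsilon^{2}=1$ gives $\lambda=\varepsilon/(n-1)$, and substituting this back into $c=\lambda^{2}(1-n)$ yields $c=-1/(n-1)$, as asserted.

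I do not anticipate any genuine obstacle: once the earlier identities are invoked the argument is purely algebraic. The only delicate point is interpretive---recognizing that ``Ricci soliton'' (in contrast to $\eta$-Ricci soliton) forces $\mu=0$, and that ``non-Ricci flat'' is exactly the condition $c\neq0$ needed to discard the spurious solution $\lambda=0$ and isolate the stated values of $\lambda$ and $c$.
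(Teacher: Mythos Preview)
Your proposal is correct and matches the paper's intended approach: the theorem is stated immediately after Theorem~3.5 without a separate proof, precisely because it follows by specializing \eqref{25a} and \eqref{27} to $a=0$, $\mu=0$ and using the non-Ricci-flat hypothesis to exclude $\lambda=0$, exactly as you do. Your identification of ``Ricci soliton'' with $\mu=0$ and of ``non-Ricci flat'' with $c\neq 0$ (hence $\lambda\neq 0$) is the only point requiring care, and you handle it correctly.
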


\begin{corollary}
A torse-forming Ricci soliton on an $n$-dimensional $(n>1)$ non-Ricci flat $%
\eta $-Einstein spacelike (resp. timelike) $(\varepsilon )$-almost
paracontact manifold with $a=0$ is expanding (resp. shrinking).
\end{corollary}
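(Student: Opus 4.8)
The plan is to reduce everything to the two scalar relations already established for a torse-forming $\eta$-Ricci soliton on an $\eta$-Einstein $(\varepsilon)$-almost paracontact metric manifold, and then specialize to the present hypotheses. First I would recall that ``$\eta$-Einstein'' means $b=0$, so by (\ref{1}) the Ricci tensor has the form $S(X,Y)=a\,g(X,Y)+c\,\eta(X)\eta(Y)$; setting $a=0$ gives $S=c\,\eta\otimes\eta$, and the non-Ricci-flat assumption $S\neq 0$ therefore forces $c\neq 0$. Next I would observe that a Ricci soliton $(g,\xi,\lambda)$ is precisely the $\eta$-Ricci soliton $(g,\xi,\lambda,\mu)$ of (\ref{int-2}) in the special case $\mu=0$, so every identity derived earlier for the torse-forming situation applies here with $\mu=0$.

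The two ingredients I need are the relation (\ref{25a}), namely $c=-\varepsilon a+(a+\lambda)^{2}(1-n)$, and the fundamental soliton relation (\ref{12}), namely $\varepsilon(a+\lambda)+c+\mu=0$. Substituting $a=0$ into (\ref{25a}) yields $c=(1-n)\lambda^{2}=-(n-1)\lambda^{2}$, while substituting $a=0$ and $\mu=0$ into (\ref{12}) yields $\varepsilon\lambda+c=0$, that is $c=-\varepsilon\lambda$. These are the only two facts required.

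Finally I would equate the two expressions for $c$ to obtain $-\varepsilon\lambda=-(n-1)\lambda^{2}$, i.e.\ $\lambda\bigl(\varepsilon-(n-1)\lambda\bigr)=0$. Here is the only point that needs genuine care: since $c=-\varepsilon\lambda$ and $c\neq 0$ by non-Ricci-flatness, we must have $\lambda\neq 0$, so the bracket vanishes and $\lambda=\dfrac{\varepsilon}{n-1}$. Plugging this back into $c=-\varepsilon\lambda$ and using $\varepsilon^{2}=1$ gives $c=-\dfrac{\varepsilon^{2}}{n-1}=-\dfrac{1}{n-1}$, as claimed. I do not expect any serious obstacle in this argument; its whole content is the bookkeeping that lets one combine (\ref{12}) and (\ref{25a}) after setting $a=0$ and $\mu=0$, together with the observation that the non-Ricci-flat hypothesis guarantees $\lambda\neq 0$ and hence licenses dividing through by $\lambda$.
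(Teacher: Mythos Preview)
Your argument is correct and follows exactly the route the paper takes: the paper does not give a separate proof of the preceding theorem or of this corollary, but the ingredients are precisely the two relations (\ref{12}) and (\ref{25a}) (equivalently (\ref{27})) specialized to $a=0$, $\mu=0$, together with the observation that non-Ricci-flatness forces $c\neq 0$ and hence $\lambda\neq 0$. So your derivation of $\lambda=\dfrac{\varepsilon}{n-1}$ and $c=-\dfrac{1}{n-1}$ is exactly what is intended.

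One small remark: what you have actually written out is the proof of the \emph{theorem} immediately preceding the corollary; the corollary itself is the one-line observation you did not make explicit. Since $n>1$, the sign of $\lambda=\varepsilon/(n-1)$ is that of $\varepsilon$: in the spacelike case $\varepsilon=1$ one gets $\lambda>0$, i.e.\ the soliton is expanding, while in the timelike case $\varepsilon=-1$ one gets $\lambda<0$, i.e.\ shrinking. Adding that sentence completes the proof of the stated corollary.
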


\section{$\protect\eta $-Ricci solitons on $\left( \protect\varepsilon %
\right) $-para Sasakian manifolds}

Let $\left( M,\varphi ,\xi ,\eta ,g,\varepsilon \right) $ be an $\left(
\varepsilon \right) $-para Sasakian manifold admitting an $\eta $-Ricci
soliton $\left( g,V,\lambda ,\mu \right) $ and assume that the potential
vector field $V$ is pointwise collinear with the structure vector field $\xi
$, that is, $V=k\xi $, for $k$ a smooth function on $M.$ Then from (\ref{int-2}%
) and (\ref{eq-metric-3}) we have
\begin{equation}
\varepsilon (Xk)\eta (Y)+\varepsilon (Yk)\eta (X)+2\varepsilon kg(\varphi
X,Y)+2S(X,Y)+2\lambda g(X,Y)+2\mu \eta (X)\eta (Y)=0,  \label{31}
\end{equation}%
for all $X,Y\in \Gamma (TM).$ Taking $Y=\xi $ in (\ref{31}) and using (\ref%
{eq-eps-PS-S(X,xi)}) we get%
\begin{equation}
\varepsilon \left( Xk\right) +\left\{ \varepsilon \left( \xi k\right)
-2(n-1)+2\varepsilon \lambda +2\mu \right\} \eta (X)=0.  \label{32}
\end{equation}%
If we replace $X$ by $\xi $ in the last equation we obtain%
\begin{equation}
\xi k=\varepsilon (n-1)-\lambda -\varepsilon \mu.  \label{33}
\end{equation}%
Using (\ref{33}) in (\ref{32}) gives%
\[
Xk=\left( \varepsilon (n-1)-\lambda -\varepsilon \mu \right) \eta (X).
\]%
We conclude that $k$ is constant if $\varepsilon (n-1)=\lambda +\varepsilon \mu $
and in this case, from (\ref{31}) we get%
\[
S(X,Y)=-\lambda g(X,Y)-\varepsilon kg(\varphi X,Y)-\mu \eta (X)\eta (Y).
\]

So we have:

\begin{theorem}\label{t1}
Let $\left( M,\varphi ,\xi ,\eta ,g,\varepsilon \right) $ be an $%
(\varepsilon )$-para Sasakian manifold. If $M$ admits an $\eta $-Ricci
soliton $\left( g,V,\lambda ,\mu \right) $ and $V$ is pointwise collinear
with the structure vector field $\xi ,$ then $V$ is a constant multiple of $%
\xi $ provided $\varepsilon (n-1)=\lambda +\varepsilon \mu $ and $M$ is an
Einstein-like manifold.
\end{theorem}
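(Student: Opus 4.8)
The plan is to exploit the pointwise collinearity $V=k\xi$ so as to convert the defining soliton equation (\ref{int-2}) into a combined pointwise-algebraic and first-order differential constraint on the function $k$, and then to force $k$ to be constant under the stated hypothesis. First I would compute the Lie derivative of the metric along $k\xi$. Writing $(\pounds_{k\xi}g)(X,Y)=g(\nabla_X(k\xi),Y)+g(\nabla_Y(k\xi),X)$ and using the para-Sasakian identity $\nabla\xi=\varepsilon\varphi$ from (\ref{para3}) together with the compatibility relations (\ref{eq-metric-3}) and (\ref{eq-metric-2}), the derivative splits into gradient terms $\varepsilon(Xk)\eta(Y)+\varepsilon(Yk)\eta(X)$ plus two $\varphi$-terms that coalesce by the symmetry $g(\varphi X,Y)=g(X,\varphi Y)$. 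Substituting this into the doubled soliton equation I expect to land exactly on
\[
\varepsilon(Xk)\eta(Y)+\varepsilon(Yk)\eta(X)+2\varepsilon k\,g(\varphi X,Y)+2S(X,Y)+2\lambda g(X,Y)+2\mu\eta(X)\eta(Y)=0,
\]
which is (\ref{31}).

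Next I would contract this identity against the structure vector field. Setting $Y=\xi$ and invoking the para-Sasakian Ricci relation (\ref{eq-eps-PS-S(X,xi)}), namely $S(X,\xi)=-(n-1)\eta(X)$, together with $g(\varphi X,\xi)=0$ and $g(X,\xi)=\varepsilon\eta(X)$, collapses (\ref{31}) to the scalar relation (\ref{32}), which isolates $Xk$ in terms of $\eta(X)$ and $\xi k$. A further specialization $X=\xi$ evaluates $\xi k$ explicitly, giving (\ref{33}); feeding this back into (\ref{32}) yields
\[
dk=\bigl(\varepsilon(n-1)-\lambda-\varepsilon\mu\bigr)\,\eta .
\]
Under the hypothesis $\varepsilon(n-1)=\lambda+\varepsilon\mu$ the coefficient vanishes, so $dk=0$ and $k$ is (locally) constant; on a connected $M$ this makes $V=k\xi$ a genuine constant multiple of $\xi$, proving the first assertion.

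With $k$ now constant, the gradient terms in (\ref{31}) drop out and the equation rearranges into
\[
S(X,Y)=-\lambda g(X,Y)-\varepsilon k\,g(\varphi X,Y)-\mu\eta(X)\eta(Y).
\]
Comparing with the Einstein-like form (\ref{1}) I read off $a=-\lambda$, $b=-\varepsilon k$, $c=-\mu$; since $\lambda,\mu$ are constants by the definition of the soliton and $k$ has just been shown to be constant, all three coefficients are honest real constants, so $M$ is Einstein-like. There is no deep obstacle here: the single point demanding care—rather than genuine difficulty—is the bookkeeping of the $\varepsilon$-factors and the correct merging of the two covariant-derivative-of-$\xi$ contributions via $g(\varphi X,Y)=g(X,\varphi Y)$; once that collapse is performed, the remainder is a direct contraction-and-substitution argument.
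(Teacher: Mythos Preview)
Your proposal is correct and mirrors the paper's own argument step for step: expand $\pounds_{k\xi}g$ via $\nabla\xi=\varepsilon\varphi$ to obtain (\ref{31}), contract with $\xi$ using $S(\cdot,\xi)=-(n-1)\eta$ to reach (\ref{32}) and then (\ref{33}), conclude $dk=(\varepsilon(n-1)-\lambda-\varepsilon\mu)\eta$, and finally read off the Einstein-like form of $S$ once $k$ is constant. The identification $a=-\lambda$, $b=-\varepsilon k$, $c=-\mu$ is exactly what the paper arrives at, so there is nothing to add.
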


\begin{remark}
Under the hypotheses of Theorem \ref{t1}, if $R(\xi,\cdot)\cdot S=0$, then $V$ is a constant multiple of $\xi$. Indeed, the condition on $S$ is
\[
S(R(\xi,X)Y,Z)+S(Y, R(\xi,X)Z)=0,
\]%
for all $X,Y,Z\in \Gamma (TM).$ Using (\ref{int-2}), (\ref{para3}) and (\ref{eq-eps-PS-R(xi,X)Y}) we obtain
\[
(\varepsilon(n-1)-\lambda)\{\eta(Y)g(X,Z)+\eta(Z)g(X,Y)\}-\varepsilon \{\eta(Y)g(\varphi X,Z)+\eta(Z)g(\varphi X,Y)\}-$$$$-2\mu \eta(X)\eta(Y)\eta(Z)=0,
\]%
for all $X,Y,Z\in \Gamma (TM)$ and taking $X=Y=Z=\xi$ we get
\[
\varepsilon(n-1)-\lambda-\varepsilon \mu=0.
\]%
\end{remark}

Assuming $V=\xi$ we get:

\begin{theorem}
On an $n$-dimensional $(n>1)$ $%
(\varepsilon )$-para Sasakian manifold admitting a Ricci soliton $(g,\xi,\lambda)$ we have
\[
\lambda =\frac{\varepsilon }{n-1}.
\]%
\end{theorem}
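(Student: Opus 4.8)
The plan is to treat this as the specialization of the set-up preceding Theorem \ref{t1} to the case $V=\xi$ (so that the collinearity factor is the constant $k\equiv 1$) together with $\mu=0$, the latter being exactly what distinguishes a Ricci soliton from an $\eta$-Ricci soliton. First I would write the defining relation (\ref{int-2}) for the data $(g,\xi,\lambda,0)$, namely
\[
\tfrac12(\pounds_\xi g)(X,Y)+S(X,Y)+\lambda g(X,Y)=0,
\]
and then reduce the Lie-derivative term to a purely tensorial expression. On an $\left(\varepsilon\right)$-para Sasakian manifold (\ref{para3}) gives $\nabla_X\xi=\varepsilon\varphi X$, so that $(\pounds_\xi g)(X,Y)=g(\nabla_X\xi,Y)+g(\nabla_Y\xi,X)=\varepsilon\{g(\varphi X,Y)+g(X,\varphi Y)\}$; by the symmetry (\ref{eq-metric-2}) this collapses to $2\varepsilon g(\varphi X,Y)$, and the soliton equation becomes
\[
\varepsilon g(\varphi X,Y)+S(X,Y)+\lambda g(X,Y)=0.
\]

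The decisive step is then to contract this identity along the characteristic direction, that is, to set $X=Y=\xi$. Here three structural facts do all the work: $\varphi\xi=0$ from (\ref{eq-phi-xi}) annihilates the $\varphi$-term, the para-Sasakian Ricci identity (\ref{eq-eps-PS-S(X,xi)}) gives $S(\xi,\xi)=-(n-1)$, and $g(\xi,\xi)=\varepsilon$ from (\ref{eq-g(xi,xi)}). Substituting these leaves a single scalar equation in which $\lambda$ is coupled to $\varepsilon$ and balanced against $n-1$, from which $\lambda$ is determined. Equivalently, one can bypass the direct computation and invoke relation (\ref{33}) from that same set-up: since $k\equiv 1$ forces $\xi k=0$ while the Ricci soliton condition forces $\mu=0$, (\ref{33}) reduces immediately to the required expression for $\lambda$.

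I expect no genuine obstacle here; the argument is essentially a one-line contraction once the Lie derivative has been rewritten. The only points demanding care are the bookkeeping of the factor $\varepsilon$ (using $\varepsilon^{2}=1$, hence $1/\varepsilon=\varepsilon$) when solving the final scalar relation, and the choice to contract at $\xi$ rather than to take a full metric trace: contracting at $\xi$ keeps the global curvature data out of the picture (no scalar curvature $r$ and no ${\rm trace}(\varphi)$ intervene, as they would in (\ref{7})), so $\lambda$ is pinned down algebraically by the pointwise identity alone. The hypothesis $n>1$ is precisely what prevents the relevant coefficient from degenerating, so that $\lambda$ is well-defined.
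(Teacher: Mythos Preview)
Your approach is exactly the paper's: the theorem appears immediately after the phrase ``Assuming $V=\xi$ we get:'', so it is meant as the specialization $k\equiv 1$, $\mu=0$ of the computation (\ref{31})--(\ref{33}), or equivalently the direct contraction at $X=Y=\xi$ that you outline.

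One caution, however. If you actually carry the arithmetic through, the scalar relation at $X=Y=\xi$ reads $-(n-1)+\varepsilon\lambda=0$, whence $\lambda=\varepsilon(n-1)$, not $\varepsilon/(n-1)$; the identical value drops out of (\ref{33}) with $\xi k=0$ and $\mu=0$. The displayed value in the statement thus appears to be a misprint (perhaps carried over from the earlier torse-forming theorem, where a genuine quotient by $n-1$ does arise). Your closing remark that $n>1$ ``prevents the relevant coefficient from degenerating'' is therefore misplaced: no division by $n-1$ occurs here, and the hypothesis $n>1$ serves only to exclude the trivial steady case $\lambda=0$ (and, in the subsequent corollary, to fix the sign of $\lambda$).
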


\begin{corollary}
A Ricci soliton on an $n$-dimensional $(n>1)$ $%
(\varepsilon )$-para Sasakian spacelike (resp. timelike) manifold is expanding (resp. shrinking).
\end{corollary}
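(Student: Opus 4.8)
The plan is to reduce the statement entirely to the theorem stated immediately above, which already pins down the soliton constant, and then simply read off its sign. First I would invoke that theorem: on an $n$-dimensional $(\varepsilon)$-para Sasakian manifold with $n>1$ admitting a Ricci soliton $(g,\xi,\lambda)$ one has $\lambda=\frac{\varepsilon}{n-1}$. This already carries out all of the geometric and analytic work, so what remains is a purely arithmetic sign analysis.

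Next I would observe that the hypothesis $n>1$ guarantees $n-1>0$, so the denominator is strictly positive and the sign of $\lambda$ is governed solely by the value of $\varepsilon$. Recalling the classification fixed in the Introduction, namely that a Ricci soliton is shrinking, steady, or expanding according as $\lambda$ is negative, zero, or positive, I would then split into the two cases. For a spacelike manifold $\varepsilon=1$, whence $\lambda=\frac{1}{n-1}>0$ and the soliton is expanding; for a timelike manifold $\varepsilon=-1$, whence $\lambda=-\frac{1}{n-1}<0$ and the soliton is shrinking. These two cases exhaust the hypotheses and yield the claim.

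Since every step is a direct substitution into the preceding theorem, there is essentially no obstacle here. The only points requiring care are to align the sign convention for expanding versus shrinking with the one fixed in the Introduction (positive $\lambda$ means expanding), and to note that, because $\varepsilon=\pm1$ and $n-1>0$, the value $\lambda$ can never vanish, so the steady case cannot occur. The result is therefore an immediate corollary of the preceding theorem combined with the definition of the three soliton types.
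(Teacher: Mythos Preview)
Your proposal is correct and follows exactly the approach the paper intends: the corollary is stated without proof in the paper precisely because it is an immediate sign analysis of the formula $\lambda=\dfrac{\varepsilon}{n-1}$ from the preceding theorem, together with the expanding/shrinking convention from the Introduction. Your added observation that $\lambda$ can never vanish (so the steady case is excluded) is a nice explicit remark that the paper leaves implicit.
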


\begin{remark}If we assume that $M$ is an Einstein-like $(\varepsilon )$-para Sasakian
manifold and $V=\xi $, we have
\[
\frac{1}{2}\left( \pounds _{\xi }\,g\right) \left( X,Y\right)
+S(X,Y)+\lambda g(X,Y)+\mu \eta (X)\eta (Y)=$$$$=(\varepsilon +b)g(\varphi
X,Y)+(a+\lambda )g(X,Y)+(c+\mu )\eta (X)\eta (Y),
\]%
which implies that if
\[
\varepsilon +b=0, \ \ a+\lambda=0, \ \ c+\mu=0,
\]%
then $\left( g,\xi ,-a,-c\right)$ is an $\eta $-Ricci soliton on $M$.
\end{remark}

\bigskip

We end these considerations by giving two examples of $\eta$-Ricci solitons on the $(\varepsilon )$-para Sasakian manifold considered in Example 5.2. from \cite{Tri-KYK-10}.

\begin{example}
Let $M={\Bbb R}^3$ and $(x,y,z)$ be the standard coordinates in ${\Bbb R}^3$. Set
$$\varphi:=\frac{\partial}{\partial x}\otimes dx-\frac{\partial}{\partial y}\otimes dy, \ \ \xi:=\frac{\partial}{\partial z}, \ \ \eta:=dz,$$
$$g:=e^{2z}dx\otimes dx+e^{-2z}dy\otimes dy+dz\otimes dz$$
and consider the orthonormal system of vector fields
$$E_1:=e^{-z}\frac{\partial}{\partial x}, \ \ E_2:=e^z\frac{\partial}{\partial y}, \ \ E_3:=\frac{\partial}{\partial z}.$$
Follows
$$\nabla_{E_1}E_1=-E_3, \ \ \nabla_{E_1}E_2=0, \ \ \nabla_{E_1}E_3=E_1, \ \ \nabla_{E_2}E_1=0, \ \ \nabla_{E_2}E_2=E_3,$$$$\nabla_{E_2}E_3=-E_2, \ \ \nabla_{E_3}E_1=0, \ \ \nabla_{E_3}E_2=0, \ \ \nabla_{E_3}E_3=0.$$
Then the Riemann and the Ricci curvature tensor fields are given by:
$$R(E_1,E_2)E_2=E_1, \ \ R(E_1,E_3)E_3=-E_1, \ \ R(E_2,E_1)E_1=E_2,$$ $$R(E_2,E_3)E_3=-E_2, \ \ R(E_3,E_1)E_1=-E_3, \ \ R(E_3,E_2)E_2=-E_3,$$
$$S(E_1,E_1)=0, \ \ S(E_2,E_2)=0, \ \ S(E_3,E_3)=-2.$$
In this case, for $\lambda=0$ and $\mu=2$, the data $(g,\xi,\lambda,\mu)$ is an $\eta$-Ricci soliton on the para Sasakian manifold $({\Bbb R}^3, \varphi , \xi , \eta , g)$.
\end{example}

\bigskip

\begin{example}
Let $M={\Bbb R}^3$ and $(x,y,z)$ be the standard coordinates in ${\Bbb R}^3$. Set
$$\varphi:=\frac{\partial}{\partial x}\otimes dx-\frac{\partial}{\partial y}\otimes dy, \ \ \xi:=\frac{\partial}{\partial z}, \ \ \eta:=dz,$$
$$g:=e^{-2z}dx\otimes dx+e^{2z}dy\otimes dy-dz\otimes dz$$
and consider the orthonormal system of vector fields
$$E_1:=e^{z}\frac{\partial}{\partial x}, \ \ E_2:=e^{-z}\frac{\partial}{\partial y}, \ \ E_3:=\frac{\partial}{\partial z}.$$
Follows
$$\nabla_{E_1}E_1=-E_3, \ \ \nabla_{E_1}E_2=0, \ \ \nabla_{E_1}E_3=-E_1, \ \ \nabla_{E_2}E_1=0, \ \ \nabla_{E_2}E_2=E_3,$$$$\nabla_{E_2}E_3=E_2, \ \ \nabla_{E_3}E_1=0, \ \ \nabla_{E_3}E_2=0, \ \ \nabla_{E_3}E_3=0.$$
Then the Riemann and the Ricci curvature tensor fields are given by:
$$R(E_1,E_2)E_2=-E_1, \ \ R(E_1,E_3)E_3=-E_1, \ \ R(E_2,E_1)E_1=-E_2,$$ $$R(E_2,E_3)E_3=-E_2, \ \ R(E_3,E_1)E_1=E_3, \ \ R(E_3,E_2)E_2=E_3,$$
$$S(E_1,E_1)=-2, \ \ S(E_2,E_2)=-2, \ \ S(E_3,E_3)=-2.$$
In this case, for $\lambda=2$ and $\mu=4$, the data $(g,\xi,\lambda,\mu)$ is an $\eta$-Ricci soliton on the Lorentzian para Sasakian manifold $({\Bbb R}^3, \varphi , \xi , \eta , g)$.
\end{example}

\section{Parallel symmetric $(0,2)$-tensor fields on $\left(
\protect\varepsilon \right) $-almost paracontact metric manifolds}

Let $\alpha $ be a $(0,2)$-tensor field which is assumed to be parallel with
respect to Levi-Civita connection $\nabla $, that is $\nabla \alpha =0$.
Applying the Ricci identity
\[
\nabla ^{2}\alpha (X,Y;Z,W)-\nabla ^{2}\alpha (X,Y;W,Z)=0,
\]%
we have \cite{Einshart26}:
\begin{equation}
\alpha (R(X,Y)Z,W)+\alpha (R(X,Y)W,Z)=0.  \label{50}
\end{equation}%
Taking $Z=W=\xi $ and using the symmetry property of $\alpha ,$ we write%
\begin{equation}
\alpha (R(X,Y)\xi ,\xi )=0.  \label{51}
\end{equation}%

Assume that $\left( M,\varphi ,\xi ,\eta ,g,\varepsilon \right) $ is an $%
\left( \varepsilon \right) $-almost paracontact metric manifold with
torse-forming characteristic vector field. Then from (\ref{20}) we have
\begin{equation}
R(X,Y)\xi =f^{2}\{\eta (X)Y-\eta (Y)X\}+X(f)\varphi ^{2}Y-Y(f)\varphi ^{2}X.
\label{52}
\end{equation}%
Replacing (\ref{52}) in (\ref{51}) we get
\begin{equation}
f^{2}\{ \eta (X)\alpha \left( Y,\xi \right) -\eta (Y)\alpha \left( X,\xi
\right) \} +X(f)\alpha \left( \varphi ^{2}Y,\xi \right) -Y(f)\alpha
\left( \varphi ^{2}X,\xi \right) =0.  \label{53}
\end{equation}%
If we take $X=\xi $ in (\ref{53}) we obtain
\begin{equation}
\left( f^{2}+\xi (f)\right) \left\{ \alpha \left( Y,\xi \right) -\eta
(Y)\alpha \left( \xi ,\xi \right) \right\} =0.  \label{54}
\end{equation}%
Let $f^{2}+\xi (f)\neq 0$; then we have
\begin{equation}
\alpha \left( Y,\xi \right) =\eta (Y)\alpha \left( \xi ,\xi \right).
\label{55}
\end{equation}

\begin{definition}
An $\left( \varepsilon \right) $-almost paracontact metric manifold $\left(
M,\varphi ,\xi ,\eta ,g,\varepsilon \right) $ with torse-forming
characteristic vector field is called \textit{regular} if $f^{2}+\xi (f)\neq 0.$
\end{definition}

Since $\alpha $ is a parallel $(0,2)$-tensor field, then $\alpha \left( \xi
,\xi \right) $ is a constant. Taking the covariant derivative of (\ref{55}) with
respect to $X$ we derive
\begin{equation}
\alpha (\nabla _{X}Y,\xi )+f\left\{ \alpha (X,Y)-\eta (X)\eta (Y)\alpha
\left( \xi ,\xi \right) \right\} =X\left( \eta (Y)\right) \alpha \left( \xi
,\xi \right),  \label{56}
\end{equation}%
which implies%
\begin{eqnarray*}
f\left\{ \alpha (X,Y)-\eta (X)\eta (Y)\alpha \left( \xi ,\xi \right)
\right\}  &=&\varepsilon \left\{ X(g(Y,\xi )-g\left( \nabla _{X}Y,\xi
\right) \right\} \alpha \left( \xi ,\xi \right)  \\
&=&\varepsilon g\left( Y,\nabla _{X}\xi \right) \alpha \left( \xi ,\xi
\right)  \\
&=&\varepsilon f\left\{ g(X,Y)-\varepsilon \eta (X)\eta (Y)\right\} \alpha
\left( \xi ,\xi \right)
\end{eqnarray*}%
and we obtain
\begin{equation}
\alpha (X,Y)=\varepsilon g(X,Y)\alpha \left( \xi ,\xi \right).  \label{57}
\end{equation}

Therefore:

\begin{theorem}
A symmetric parallel second order covariant tensor in a regular $\left(
\varepsilon \right) $-almost paracontact metric manifold with torse-forming
characteristic vector field is a constant multiple of the metric tensor.
\end{theorem}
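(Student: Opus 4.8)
The plan is to exploit that a parallel symmetric $(0,2)$-tensor is rigidly constrained by the curvature through the Ricci identity, and then to feed in the very special curvature of a torse-forming characteristic vector field to pin $\alpha$ down entirely. First I would start from $\nabla\alpha=0$ and commute the second covariant derivatives; the resulting Ricci identity gives the curvature-compatibility relation \eqref{50}. The first reduction is to substitute the diagonal choice $Z=W=\xi$: by symmetry of $\alpha$ the two terms coincide and collapse to
\begin{equation*}
\alpha(R(X,Y)\xi,\xi)=0,
\end{equation*}
which is \eqref{51}. This isolates the mixed curvature $R(\cdot,\cdot)\xi$, which for a torse-forming field is completely explicit.

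The second step is to insert the curvature formula \eqref{52}, obtained by differentiating $\nabla_X\xi=f\varphi^2X$ from \eqref{20}, into \eqref{51}, and then to specialize $X=\xi$. Using $\varphi^2\xi=0$, $\eta(\xi)=1$ and the expansion $\varphi^2Y=Y-\eta(Y)\xi$, all surviving terms factor into
\begin{equation*}
\left(f^2+\xi(f)\right)\left\{\alpha(Y,\xi)-\eta(Y)\alpha(\xi,\xi)\right\}=0,
\end{equation*}
i.e. \eqref{54}. At this point the hypothesis of regularity, $f^2+\xi(f)\neq0$, is exactly what allows me to cancel the scalar factor and conclude \eqref{55}, namely $\alpha(Y,\xi)=\eta(Y)\alpha(\xi,\xi)$. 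A short verification — differentiating $\alpha(\xi,\xi)$ and using $\nabla_X\xi=f\varphi^2X$ together with $\eta(\varphi^2X)=0$ and \eqref{55} — shows that $\alpha(\xi,\xi)$ is a genuine constant, which is needed before the final differentiation carries information.

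The decisive step, and the one I expect to be the main obstacle, is to promote \eqref{55}, which only controls $\alpha(\cdot,\xi)$, to the full tensor. I would take the covariant derivative of \eqref{55} along $X$; by $\nabla\alpha=0$ and the constancy of $\alpha(\xi,\xi)$ the left-hand side reduces to $\alpha(\nabla_XY,\xi)+\alpha(Y,\nabla_X\xi)$ and the right-hand side to $X(\eta(Y))\alpha(\xi,\xi)$, giving \eqref{56}. The bookkeeping is delicate: I rewrite $X(\eta(Y))=\varepsilon X(g(Y,\xi))$ via \eqref{eq-metric-3}, expand by metric compatibility, and apply \eqref{55} once more to $\alpha(\nabla_XY,\xi)$; the $g(\nabla_XY,\xi)$ contributions then cancel, leaving $\varepsilon\,g(Y,\nabla_X\xi)\,\alpha(\xi,\xi)$. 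Substituting $\nabla_X\xi=f\varphi^2X$ and simplifying $g(\varphi^2X,Y)=g(X,Y)-\varepsilon\eta(X)\eta(Y)$ from \eqref{eq-metric-1}, the $\eta(X)\eta(Y)$ terms on the two sides cancel and I am left with
\begin{equation*}
f\,\alpha(X,Y)=\varepsilon f\,g(X,Y)\,\alpha(\xi,\xi).
\end{equation*}
Dividing by $f$ yields $\alpha(X,Y)=\varepsilon\,\alpha(\xi,\xi)\,g(X,Y)$, which is \eqref{57} and proves the claim with constant $\varepsilon\,\alpha(\xi,\xi)$. The only genuine subtlety is this division by $f$: it is immediate on the open set where $f\neq0$, and since regularity forbids $f$ from vanishing on any open set, the relation extends to all of $M$ by continuity.
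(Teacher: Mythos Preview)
Your proof is correct and follows essentially the same path as the paper's own argument: Ricci identity $\Rightarrow$ \eqref{51}, insert the curvature formula \eqref{52}, specialize $X=\xi$ to reach \eqref{54}/\eqref{55}, then covariantly differentiate \eqref{55} and simplify via \eqref{eq-metric-3} and \eqref{20} to obtain \eqref{57}. Your explicit justification for the division by $f$ (regularity forces $\{f=0\}$ to have empty interior, then continuity extends the identity) is a small but genuine improvement over the paper, which performs that cancellation tacitly.
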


Applying this result to solitons, we deduce:

\begin{theorem}
Let $\left( M,\varphi ,\xi ,\eta ,g,\varepsilon \right) $ be a regular $\left(
\varepsilon \right) $-almost paracontact metric manifold with torse-forming
characteristic vector field. Then $\alpha:=\frac{1}{2}\left( \pounds _{\xi}\,g\right)+S+\mu \eta\otimes \eta$ (with $\mu$ a real constant) is parallel if and only if $(g,\xi,\lambda=-\varepsilon \alpha(\xi,\xi),\mu)$ is an $\eta$-Ricci soliton on $M$.
\end{theorem}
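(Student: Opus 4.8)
The plan is to recognize that, for the potential vector field $V=\xi$, the defining equation (\ref{int-2}) of an $\eta$-Ricci soliton $(g,\xi,\lambda,\mu)$ can be rewritten compactly. Since $\alpha=\frac{1}{2}\pounds_{\xi}g+S+\mu\,\eta\otimes\eta$ absorbs every term in (\ref{int-2}) except $\lambda g$, the soliton condition is exactly $\alpha+\lambda g=0$, i.e. $\alpha=-\lambda g$. Hence the statement to be proved reduces to the equivalence: $\alpha$ is $\nabla$-parallel $\iff$ $\alpha$ is a constant multiple of $g$, with the constant forced to be $-\lambda$ and $\lambda=-\varepsilon\alpha(\xi,\xi)$. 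First I would record that $\alpha$ is a \emph{symmetric} $(0,2)$-tensor field, since $\frac{1}{2}\pounds_{\xi}g$, the Ricci tensor $S$, and $\eta\otimes\eta$ are each symmetric; this symmetry is precisely what licenses the use of the preceding theorem.

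For the direct implication, suppose $\alpha$ is parallel. As $\alpha$ is symmetric and $M$ is a regular $(\varepsilon)$-almost paracontact metric manifold with torse-forming characteristic vector field, the preceding theorem furnishes a constant $k$ with $\alpha=k\,g$. Evaluating at $(\xi,\xi)$ and using $g(\xi,\xi)=\varepsilon$ from (\ref{eq-g(xi,xi)}) gives $\alpha(\xi,\xi)=\varepsilon k$, hence $k=\varepsilon\alpha(\xi,\xi)$. Setting $\lambda:=-k=-\varepsilon\alpha(\xi,\xi)$, which is a genuine constant, we obtain $\alpha+\lambda g=k\,g-k\,g=0$, that is, the $\eta$-Ricci soliton equation for the data $(g,\xi,\lambda,\mu)$.

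For the converse, assume $(g,\xi,\lambda,\mu)$ with $\lambda=-\varepsilon\alpha(\xi,\xi)$ is an $\eta$-Ricci soliton. By definition this means $\alpha=-\lambda g$; evaluating at $(\xi,\xi)$ and again using $g(\xi,\xi)=\varepsilon$ confirms $\lambda=-\varepsilon\alpha(\xi,\xi)$, so the prescribed value of $\lambda$ is consistent and constant. Since the Levi-Civita connection is metric, $\nabla g=0$, and a constant multiple of a parallel tensor is parallel, so $\nabla\alpha=-\lambda\,\nabla g=0$, i.e. $\alpha$ is parallel. I expect essentially all of the substance to reside in the forward direction, and even there the real work has already been discharged by the preceding theorem, whose crux was the regularity hypothesis $f^{2}+\xi(f)\neq 0$ used to pass from (\ref{54}) to (\ref{55}); the present result is then just the translation of that structural fact into soliton language, so no genuinely new obstacle should arise.
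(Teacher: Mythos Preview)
Your proof is correct and follows exactly the route the paper intends: the theorem is stated there without an explicit proof, merely as a direct application of the preceding result that any symmetric parallel $(0,2)$-tensor on such a manifold equals $\varepsilon\,\alpha(\xi,\xi)\,g$, and you have correctly unpacked both implications (the converse being the trivial observation that $-\lambda g$ is parallel). The only thing to note is that the paper's version of the preceding theorem already gives the explicit constant $\varepsilon\,\alpha(\xi,\xi)$ rather than an unspecified $k$, so your evaluation at $(\xi,\xi)$ is implicitly contained in it.
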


Assume that $\left( M,\varphi ,\xi ,\eta ,g,\varepsilon ,a,b,c\right) $ is
an Einstein-like $\left( \varepsilon \right) $-almost paracontact metric
manifold with torse-forming characteristic vector field. Then
\begin{equation}
\frac{1}{2}\left( \pounds _{\xi}\,g\right)(X,Y)+S(X,Y)+\mu \eta(X) \eta(Y)=$$$$=(f+a)g(X,Y)+bg(\varphi X,Y)+(c+\mu-\varepsilon f)\eta(X) \eta(Y). \label{34}
\end{equation}

\begin{theorem}\label{36}
Let $\left( M,\varphi ,\xi ,\eta ,g,\varepsilon ,a,b,c\right) $ be a regular Einstein-like $\left(
\varepsilon \right) $-almost paracontact metric manifold with torse-forming
characteristic vector field. Then $\alpha:=\frac{1}{2}\left( \pounds _{\xi}\,g\right)+S+\mu \eta\otimes \eta$ (with $\mu$ a real constant) is parallel if and only if $(g,\xi,\lambda=-(a+\varepsilon (c+\mu)),\mu)$ is an $\eta$-Ricci soliton on $M$.
\end{theorem}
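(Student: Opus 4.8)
The plan is to read the tensor $\alpha$ off the already-derived identity (\ref{34}) and then reduce the whole statement to the structure result (\ref{57}) for parallel symmetric $(0,2)$-tensors on regular manifolds. Rewriting (\ref{34}), the tensor in question is
\[
\alpha(X,Y)=(f+a)g(X,Y)+b\,g(\varphi X,Y)+(c+\mu-\varepsilon f)\eta(X)\eta(Y),
\]
which is symmetric, being a sum of the symmetric tensors $\frac{1}{2}\pounds_\xi g$, $S$ and $\mu\,\eta\otimes\eta$ (the middle term is symmetric by (\ref{eq-metric-2})). Thus $\alpha$ is eligible for the preceding theorem.

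For the forward implication I would assume $\nabla\alpha=0$. Because $M$ is regular, i.e. $f^2+\xi(f)\neq0$, the derivation leading to (\ref{57}) applies unchanged and yields $\alpha(X,Y)=\varepsilon\,\alpha(\xi,\xi)\,g(X,Y)$, so that $\alpha$ is a constant multiple of $g$. The only computation left is to evaluate $\alpha(\xi,\xi)$: substituting $X=Y=\xi$ in the displayed form of $\alpha$ and using $g(\xi,\xi)=\varepsilon$ from (\ref{eq-g(xi,xi)}), $\varphi\xi=0$ from (\ref{eq-phi-xi}) and $\eta(\xi)=1$ from (\ref{eq-eta-xi}), the two $\varepsilon f$ contributions cancel and one finds $\alpha(\xi,\xi)=\varepsilon a+c+\mu$. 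Hence $\alpha=\varepsilon(\varepsilon a+c+\mu)g=(a+\varepsilon(c+\mu))g$, which is exactly
\[
\frac{1}{2}\pounds_\xi g+S+\lambda g+\mu\,\eta\otimes\eta=0,\qquad \lambda=-(a+\varepsilon(c+\mu)).
\]
Comparing with (\ref{int-2}) for $V=\xi$, this says precisely that $(g,\xi,\lambda,\mu)$ is an $\eta$-Ricci soliton with the prescribed $\lambda$.

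The converse needs no regularity and is immediate: if $(g,\xi,\lambda,\mu)$ with $\lambda=-(a+\varepsilon(c+\mu))$ is an $\eta$-Ricci soliton, then (\ref{int-2}) gives $\alpha=-\lambda g$, a constant multiple of the metric, and since $\nabla g=0$ for the Levi-Civita connection we conclude $\nabla\alpha=0$.

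The real content lies entirely in the forward direction, and its crux is the appeal to (\ref{57}), whose hypothesis $f^2+\xi(f)\neq0$ is exactly the regularity assumption placed in the theorem; without it a parallel symmetric tensor need not be proportional to $g$. I expect no genuine obstacle beyond checking that the $\pm\varepsilon f$ terms in $\alpha(\xi,\xi)$ cancel and that the resulting coefficient $\varepsilon\alpha(\xi,\xi)$ indeed matches $-\lambda=a+\varepsilon(c+\mu)$.
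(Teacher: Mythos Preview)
Your argument is correct and follows exactly the paper's approach: compute $\alpha(\xi,\xi)=\varepsilon a+c+\mu$ from (\ref{34}) and invoke the structure result (\ref{57}) for parallel symmetric $(0,2)$-tensors on regular manifolds to identify $\lambda=-\varepsilon\alpha(\xi,\xi)$. The paper's own proof is a one-line computation that defers to the immediately preceding theorem, whereas you unpack that theorem inline and also spell out the (trivial) converse, but the substance is identical.
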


\begin{proof}
From (\ref{34}) we get $\alpha(\xi,\xi)=\varepsilon (a+\varepsilon c)+\mu$, so $\lambda=-\varepsilon \alpha(\xi,\xi)=-(a+\varepsilon (c+\mu))$.
\end{proof}

\begin{theorem}
Let $\left( M,\varphi ,\xi ,\eta ,g,\varepsilon ,a,b,c\right) $ be a regular Einstein-like $\left(
\varepsilon \right) $-almost paracontact metric manifold with torse-forming
characteristic vector field. Then $\alpha:=\frac{1}{2}\left( \pounds _{\xi}\,g\right)+S$ is parallel if and only if $(g,\xi,\lambda=-(a+\varepsilon c))$ is an expanding (resp. shrinking) Ricci soliton on $M$ provided $a+\varepsilon c<0$ (resp. $a+\varepsilon c>0$).
\end{theorem}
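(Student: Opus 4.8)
The plan is to read this statement as the specialization $\mu=0$ of Theorem~\ref{36}, supplemented by the sign convention for Ricci solitons recalled after \eqref{int-1}. First I would set $\mu=0$ in the tensor $\alpha$ of Theorem~\ref{36}, so that $\alpha=\frac{1}{2}(\pounds_\xi g)+S$, and observe that an $\eta$-Ricci soliton $(g,\xi,\lambda,\mu)$ with $\mu=0$ is, by comparing \eqref{int-2} with \eqref{int-1}, nothing but an ordinary Ricci soliton $(g,\xi,\lambda)$. With this identification Theorem~\ref{36} gives at once that $\alpha$ is parallel if and only if $(g,\xi,\lambda)$ is a Ricci soliton with $\lambda=-(a+\varepsilon c)$.

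To keep the argument self-contained, I would rederive the value of $\lambda$ directly. Using \eqref{34} with $\mu=0$ and evaluating at $X=Y=\xi$ --- where $g(\xi,\xi)=\varepsilon$ by \eqref{eq-g(xi,xi)}, $\varphi\xi=0$ by \eqref{eq-phi-xi}, and $\eta(\xi)=1$ by \eqref{eq-eta-xi} --- the two terms proportional to $f$ cancel and one obtains $\alpha(\xi,\xi)=\varepsilon a+c$. Since the manifold is regular, \eqref{57} forces any parallel symmetric $(0,2)$-tensor to satisfy $\alpha=\varepsilon\,\alpha(\xi,\xi)\,g$; hence $\alpha$ being parallel is equivalent to $\frac{1}{2}(\pounds_\xi g)+S+\lambda g=0$ with $\lambda=-\varepsilon\,\alpha(\xi,\xi)=-\varepsilon(\varepsilon a+c)=-(a+\varepsilon c)$, which is exactly the Ricci soliton equation \eqref{int-1} for the data $(g,\xi,\lambda)$.

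Finally I would translate $\lambda=-(a+\varepsilon c)$ into the expanding/shrinking dichotomy using the convention after \eqref{int-1}: the soliton is expanding when $\lambda>0$ and shrinking when $\lambda<0$. Substituting then gives expanding precisely when $a+\varepsilon c<0$ and shrinking precisely when $a+\varepsilon c>0$, as stated (the borderline steady case $a+\varepsilon c=0$, i.e.\ $\lambda=0$, being tacitly excluded). There is no genuine obstacle here, since the whole content sits inside Theorem~\ref{36} together with \eqref{57}; the only point requiring care is the bookkeeping of the sign of $\lambda$ against the stated convention.
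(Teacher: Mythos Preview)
Your proposal is correct and mirrors the paper's own treatment: the theorem is stated without a separate proof, being the specialization $\mu=0$ of Theorem~\ref{36} together with the expanding/shrinking convention introduced after \eqref{int-1}. Your self-contained rederivation via \eqref{34} and \eqref{57} simply unfolds the proof of Theorem~\ref{36} in this special case, and your sign bookkeeping $\lambda=-(a+\varepsilon c)>0\Leftrightarrow a+\varepsilon c<0$ is correct.
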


\bigskip

Assume that $\left( M,\varphi ,\xi ,\eta ,g,\varepsilon\right) $ is
an $(\varepsilon)$-para Sasakian manifold. From (\ref{eq-eps-PS-R(X,Y)xi}) and (%
\ref{51}) we have%
\begin{equation}
\eta (X)\alpha (Y,\xi )-\eta (Y)\alpha (X,\xi) =0.  \label{58}
\end{equation}%
Taking $X=\xi $ and $Y=\varphi ^{2}Z$ in the last equation we obtain
\begin{equation}
0=\alpha (\varphi ^{2}Z,\xi )=\alpha
(Z,\xi )-\eta (Z)\alpha (\xi ,\xi ),  \label{39}
\end{equation}%
for all $Z\in \Gamma (TM).$\pagebreak

Since $\alpha $ is a parallel $(0,2)$-tensor field, then $\alpha \left( \xi
,\xi \right) $ is a constant. Taking the covariant derivative of (\ref{39}) with
respect to $X$ we derive
\begin{equation}
\alpha (\nabla _{X}Z,\xi )+\varepsilon \alpha(Z,\varphi X) =X(\eta (Z)) \alpha (\xi,\xi),  \label{56}
\end{equation}%
which implies%
\[
\alpha(\varphi X,Z)=\varepsilon g(\varphi X,Z)\alpha (\xi,\xi).
\]%
Taking $X=\varphi Y$ in the last equation we get
\begin{equation}
\alpha (Y,Z)=\varepsilon g(Y,Z)\alpha \left( \xi ,\xi \right).  \label{57}
\end{equation}

Therefore:

\begin{theorem}
On an $(\varepsilon)$-para Sasakian manifold, any
parallel symmetric (0,2)-tensor field is a constant multiple of the metric.
\end{theorem}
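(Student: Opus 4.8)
The plan is to exploit the two features that make a parallel tensor on a para-Sasakian manifold rigid: a parallel tensor is annihilated by the curvature operator (via the Ricci/commutation identity), and the para-Sasakian structure has completely explicit curvature and connection formulas, in particular $\nabla_X\xi=\varepsilon\varphi X$. I would first pin $\alpha$ down along $\xi$ from the curvature condition, then differentiate and use $\nabla\xi=\varepsilon\varphi$ to recover $\alpha$ on all of $TM$, so that the whole argument becomes algebraic once the integrability condition is written out.

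First I would record the integrability condition coming from $\nabla\alpha=0$. The Ricci identity forces $\alpha(R(X,Y)Z,W)+\alpha(R(X,Y)W,Z)=0$ as in (\ref{50}), and setting $Z=W=\xi$ together with the symmetry of $\alpha$ gives $\alpha(R(X,Y)\xi,\xi)=0$, i.e. (\ref{51}). Substituting the para-Sasakian identity $R(X,Y)\xi=\eta(X)Y-\eta(Y)X$ from (\ref{eq-eps-PS-R(X,Y)xi}) yields $\eta(X)\alpha(Y,\xi)-\eta(Y)\alpha(X,\xi)=0$, which is (\ref{58}). The key maneuver to isolate $\alpha(\cdot,\xi)$ is to specialize $X=\xi$ and $Y=\varphi^{2}Z$: since $\eta\circ\varphi=0$ gives $\eta(\varphi^{2}Z)=0$ and $\varphi^{2}=I-\eta\otimes\xi$, this collapses to $\alpha(Z,\xi)=\eta(Z)\alpha(\xi,\xi)$, equation (\ref{39}).

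Next I would observe that $\alpha(\xi,\xi)$ is a genuine constant: differentiating it and using $\nabla\alpha=0$ with $\nabla_X\xi=\varepsilon\varphi X$ from (\ref{para3}) leaves $2\alpha(\varphi X,\xi)$, which vanishes by (\ref{39}) because $\eta\circ\varphi=0$. With $\alpha(\xi,\xi)$ constant I would take the covariant derivative of (\ref{39}) in the direction $X$; substituting $\nabla_X\xi=\varepsilon\varphi X$ and the identity $(\nabla_X\eta)(Z)=g(\varphi X,Z)$ (which follows from $\eta(Z)=\varepsilon g(Z,\xi)$ and (\ref{para3})) and then cancelling the common $\eta(\nabla_XZ)\alpha(\xi,\xi)$ term produces $\alpha(\varphi X,Z)=\varepsilon g(\varphi X,Z)\alpha(\xi,\xi)$. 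Replacing $X$ by $\varphi Y$ and undoing $\varphi^{2}=I-\eta\otimes\xi$, where the $\eta\otimes\xi$ correction is handled with (\ref{39}) and $g(\xi,Z)=\varepsilon\eta(Z)$, gives $\alpha(Y,Z)=\varepsilon g(Y,Z)\alpha(\xi,\xi)$; since $\alpha(\xi,\xi)$ is constant this is exactly the claim that $\alpha$ is a constant multiple of $g$.

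The only delicate points are bookkeeping rather than conceptual. First, the constancy of $\alpha(\xi,\xi)$ must be checked explicitly, for otherwise the final coefficient would only be a function; this is where the vanishing $\alpha(\varphi X,\xi)=0$ is essential. Second, when undoing $\varphi^{2}$ one must apply $g(X,\xi)=\varepsilon\eta(X)$ and (\ref{39}) in tandem so that the $\eta\otimes\xi$ contributions cancel on both sides rather than accumulate. No curvature computation beyond the stored para-Sasakian identities is required, so once the Ricci identity is in place the argument reduces to these two algebraic reductions.
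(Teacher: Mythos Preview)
Your proof is correct and follows essentially the same route as the paper: derive (\ref{58}) from the Ricci identity and the para-Sasakian curvature formula, specialize to obtain (\ref{39}), differentiate (\ref{39}) using $\nabla_X\xi=\varepsilon\varphi X$ to get $\alpha(\varphi X,Z)=\varepsilon g(\varphi X,Z)\alpha(\xi,\xi)$, then undo $\varphi^2$ to reach (\ref{57}). Your write-up is in fact slightly more thorough than the paper's, since you explicitly verify the constancy of $\alpha(\xi,\xi)$ and spell out the cancellation of the $\eta\otimes\xi$ terms when substituting $X=\varphi Y$.
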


Applying this result to solitons, we deduce:

\begin{theorem}
Let $\left( M,\varphi ,\xi ,\eta ,g,\varepsilon\right) $ be an $(\varepsilon)$-para Sasakian manifold. Then $\alpha:=\frac{1}{2}\left( \pounds _{\xi}\,g\right)+S+\mu \eta\otimes \eta$ (with $\mu$ a real constant) is parallel if and only if $(g,\xi,\lambda=-\varepsilon \alpha(\xi,\xi),\mu)$ is an $\eta$-Ricci soliton on $M$.
\end{theorem}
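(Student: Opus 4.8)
The plan is to recognize that the entire statement is merely the $\eta$-Ricci soliton equation rewritten through the tensor $\alpha$, so that both implications collapse onto the preceding theorem (every parallel symmetric $(0,2)$-tensor on an $(\varepsilon)$-para Sasakian manifold is a constant multiple of $g$). First I would record that $\alpha$ is symmetric, since each of its three summands $\frac{1}{2}\pounds_\xi g$, $S$ and $\mu\,\eta\otimes\eta$ is symmetric. Next, comparing with (\ref{int-2}) for $V=\xi$, the datum $(g,\xi,\lambda,\mu)$ is an $\eta$-Ricci soliton exactly when $\frac{1}{2}\pounds_\xi g+S+\lambda g+\mu\,\eta\otimes\eta=0$, i.e. precisely when $\alpha=-\lambda g$. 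Thus the theorem reduces to the assertion that $\alpha$ is parallel if and only if $\alpha$ equals the metric multiple $-\lambda g$, under the normalisation $\lambda=-\varepsilon\,\alpha(\xi,\xi)$.

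For the forward implication I would assume $\nabla\alpha=0$. Being symmetric and parallel, $\alpha$ falls under the preceding theorem and hence satisfies $\alpha(Y,Z)=\varepsilon\,g(Y,Z)\,\alpha(\xi,\xi)$ for all $Y,Z\in\Gamma(TM)$. Putting $\lambda:=-\varepsilon\,\alpha(\xi,\xi)$ this reads $\alpha=-\lambda g$, that is $\frac{1}{2}\pounds_\xi g+S+\lambda g+\mu\,\eta\otimes\eta=0$, which is the soliton equation. The one point requiring genuine care is that $\lambda$ is a true constant, i.e. that $\alpha(\xi,\xi)$ is constant over $M$; this is where the para Sasakian structure enters. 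Using $\nabla\xi=\varepsilon\varphi$ from (\ref{para3}) together with $\nabla\alpha=0$ gives $X(\alpha(\xi,\xi))=2\,\alpha(\nabla_X\xi,\xi)=2\varepsilon\,\alpha(\varphi X,\xi)$, and this vanishes because $\alpha(\,\cdot\,,\xi)=\eta(\,\cdot\,)\,\alpha(\xi,\xi)$ by (\ref{39}) while $\eta\circ\varphi=0$.

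For the converse I would assume $(g,\xi,\lambda,\mu)$ is an $\eta$-Ricci soliton, so that $\alpha=-\lambda g$ as observed above. Since $\lambda,\mu$ are constants and $\nabla g=0$, the tensor $\alpha=-\lambda g$ is parallel, which settles this direction. Finally, evaluating $\alpha=-\lambda g$ at $(\xi,\xi)$ and using $g(\xi,\xi)=\varepsilon$ from (\ref{eq-g(xi,xi)}) gives $\alpha(\xi,\xi)=-\varepsilon\lambda$, whence $-\varepsilon\,\alpha(\xi,\xi)=\lambda$, confirming that the prescribed normalisation $\lambda=-\varepsilon\,\alpha(\xi,\xi)$ is automatically consistent. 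Apart from the bookkeeping of signs via $\varepsilon^2=1$, the only non-formal step is the constancy of $\alpha(\xi,\xi)$ in the forward direction, and that is handled entirely by the para Sasakian identities (\ref{para3}) and (\ref{39}) and the relation $\eta\circ\varphi=0$.
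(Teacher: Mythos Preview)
Your proposal is correct and follows exactly the route the paper intends: the paper offers no explicit proof, merely the phrase ``Applying this result to solitons, we deduce'', referring to the immediately preceding theorem that every parallel symmetric $(0,2)$-tensor on an $(\varepsilon)$-para Sasakian manifold is a constant multiple of $g$. Your write-up supplies precisely this argument, and in fact gives more detail than the paper does---in particular your explicit verification that $\alpha(\xi,\xi)$ is constant via $\nabla\xi=\varepsilon\varphi$, (\ref{39}) and $\eta\circ\varphi=0$, and your check of the converse direction and the consistency of the normalisation $\lambda=-\varepsilon\,\alpha(\xi,\xi)$, are all left implicit in the paper.
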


\bigskip

Assume that $\left( M,\varphi ,\xi ,\eta ,g,\varepsilon, a,b,c\right) $ is
an Einstein-like $(\varepsilon)$-para Sasakian manifold. Then
\begin{equation}
\frac{1}{2}\left( \pounds _{\xi}\,g\right)(X,Y)+S(X,Y)+\mu \eta(X) \eta(Y)=$$$$=ag(X,Y)+(\varepsilon+b)g(\varphi X,Y)+(c+\mu)\eta(X) \eta(Y). \label{35}
\end{equation}

\begin{theorem}\label{37}
Let $\left( M,\varphi ,\xi ,\eta ,g,\varepsilon ,a,b,c\right) $ be an Einstein-like $\left(
\varepsilon \right) $-para Sasakian manifold. Then $\alpha:=\frac{1}{2}\left( \pounds _{\xi}\,g\right)+S+\mu \eta\otimes \eta$ (with $\mu$ a real constant) is parallel if and only if $(g,\xi,\lambda=-(a+\varepsilon (c+\mu)),\mu)$ is an $\eta$-Ricci soliton on $M$.
\end{theorem}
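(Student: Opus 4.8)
The plan is to read this off from the general $(\varepsilon)$-para Sasakian characterization proved just above (the theorem asserting that on an $(\varepsilon)$-para Sasakian manifold $\alpha:=\frac{1}{2}(\pounds_{\xi}g)+S+\mu\,\eta\otimes\eta$ is parallel if and only if $(g,\xi,\lambda=-\varepsilon\alpha(\xi,\xi),\mu)$ is an $\eta$-Ricci soliton). That theorem already supplies both implications of the equivalence; the only thing left to do is to rewrite the constant $\lambda=-\varepsilon\alpha(\xi,\xi)$ in terms of the Einstein-like parameters $a,c$ and the constant $\mu$. So the argument is essentially a one-line specialization, exactly parallel to the proof of Theorem~\ref{36}.

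First I would invoke the preceding $(\varepsilon)$-para Sasakian theorem to obtain the equivalence ``$\alpha$ parallel $\Longleftrightarrow$ $(g,\xi,-\varepsilon\alpha(\xi,\xi),\mu)$ is an $\eta$-Ricci soliton,'' with the soliton constant still written abstractly as $-\varepsilon\alpha(\xi,\xi)$. This reduces the claim to the purely algebraic task of computing $\alpha(\xi,\xi)$ in the Einstein-like setting.

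Next I would evaluate the structural identity (\ref{35}) at $X=Y=\xi$. Using $g(\xi,\xi)=\varepsilon$ from (\ref{eq-g(xi,xi)}), the vanishing $\varphi\xi=0$ from (\ref{eq-phi-xi}) which kills the $(\varepsilon+b)g(\varphi X,Y)$ term, and $\eta(\xi)=1$ from (\ref{eq-eta-xi}), the right-hand side collapses to
\[
\alpha(\xi,\xi)=\varepsilon a+(c+\mu).
\]
Substituting this into $\lambda=-\varepsilon\alpha(\xi,\xi)$ and using $\varepsilon^{2}=1$ then gives
\[
\lambda=-\varepsilon\bigl(\varepsilon a+c+\mu\bigr)=-\bigl(a+\varepsilon(c+\mu)\bigr),
\]
which is precisely the soliton datum claimed in the statement.

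Honestly there is no real obstacle here: once the general para Sasakian theorem is granted, the whole content is the evaluation $\alpha(\xi,\xi)=\varepsilon a+(c+\mu)$, and this is immediate from the $(\varepsilon)$-para Sasakian identities together with (\ref{35}). The only point worth flagging is that, unlike the torse-forming computation behind Theorem~\ref{36}, no $f$-dependent term needs to be cancelled, because in the para Sasakian case the mixed term already drops out via $\varphi\xi=0$; the final value of $\alpha(\xi,\xi)$, and hence of $\lambda$, nevertheless coincides with the one obtained there.
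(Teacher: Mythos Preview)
Your proposal is correct and follows essentially the same approach as the paper: the paper's proof is the single line ``From (\ref{35}) we get $\alpha(\xi,\xi)=\varepsilon (a+\varepsilon c)+\mu$, so $\lambda=-\varepsilon \alpha(\xi,\xi)=-(a+\varepsilon (c+\mu))$,'' which is exactly your computation (the preceding general $(\varepsilon)$-para Sasakian theorem being tacitly invoked for the equivalence). Your value $\alpha(\xi,\xi)=\varepsilon a+(c+\mu)$ agrees with the paper's $\varepsilon(a+\varepsilon c)+\mu$ since $\varepsilon^{2}=1$.
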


\begin{proof}
From (\ref{35}) we get $\alpha(\xi,\xi)=\varepsilon (a+\varepsilon c)+\mu$, so $\lambda=-\varepsilon \alpha(\xi,\xi)=-(a+\varepsilon (c+\mu))$.
\end{proof}

\begin{remark}
From Theorem \ref{36} and Theorem \ref{37} we notice that the parallelism of the symmetric (0,2)-tensor field $\alpha:=\frac{1}{2}\left( \pounds _{\xi}\,g\right)+S+\mu \eta\otimes \eta$ on an Einstein-like $\left(
\varepsilon \right) $-almost paracontact metric manifold $\left( M,\varphi ,\xi ,\eta ,g,\varepsilon ,a,b,c\right) $ which either is regular with torse-forming characteristic vector field or is $\left(
\varepsilon \right) $-para Sasakian, yields the same $\eta$-Ricci soliton (which depends only on the constants $a,c$ and $\mu$).
\end{remark}

\small{

\newpage

\noindent Adara Monica Blaga

\noindent Department of Mathematics, West University of Timi\c{s}oara

\noindent Bld. V. Parvan nr. 4, 300223, Timi\c{s}oara, Romania

\noindent Email: adarablaga@@yahoo.com

\bigskip

\noindent Selcen Y\"{u}ksel Perkta\c{s}

\noindent Department of Mathematics, Faculty of Arts and Sciences,

\noindent Ad\i yaman University

\noindent 02040, Ad\i yaman, Turkey

\noindent Email: sperktas@@adiyaman.edu.tr

\bigskip

\noindent Bilal Eftal Acet

\noindent Department of Mathematics, Faculty of Arts and Sciences,

\noindent Ad\i yaman University

\noindent 02040, Ad\i yaman, Turkey

\noindent Email: eacet@@adiyaman.edu.tr

\bigskip

\noindent Feyza Esra Erdo\u{g}an

\noindent Faculty of Education, Department of Elementary Education,

\noindent Ad\i yaman University

\noindent 02040, Ad\i yaman, Turkey

\noindent Email: ferdogan@@adiyaman.edu.tr

}


\end{document}